\numberwithin{equation}{section} % to number equations by sections
\theoremstyle{plain}
\newtheorem{Theorem}{Theorem}[section]
\newtheorem{Lemma}[Theorem]{Lemma}
\newtheorem{Proposition}[Theorem]{Proposition}
\newtheorem*{Theorem*}{Theorem}
\theoremstyle{definition} %not in italics
\newtheorem{Remark}[Theorem]{Remark}
\newtheorem{Example}[Theorem]{Example}
\newtheorem{Definition}[Theorem]{Definition} % no numbering for definition.
\newtheorem{Notation}[Theorem]{Notation}
\newtheorem*{Definition*}{Definition}
\theoremstyle{definition} % make everything non-italics
\DeclareMathOperator{\pd}{pd}
\DeclareMathOperator{\Hom}{Hom}
\DeclareMathOperator{\Ext}{Ext}
\DeclareMathOperator{\Max}{Max}
\DeclareMathOperator{\ann}{ann}
\newcommand*{\overbar}[1]{\mkern 1.5mu\overline{\mkern-1.5mu#1\mkern-1.5mu}\mkern 1.5mu}
\definecolor{darkgreen}{rgb}{0.0, 0.5, 0.0}
\definecolor{darkred}{rgb}{0.7, 0.11, 0.11}
\begin{document}
\author{Karin Baur}
\address{School of Mathematics, University of Leeds, Leeds, LS2 9JT, United Kingdom}
\address{Ruhr-Universit\"at Bochum, 
Universit\"atsstrasse 150, 
D-44780 Bochum, Germany
}
\email{karin.baur@rub.de}

\author{Charlie Beil}
 \address{Institut f\"ur Mathematik und Wissenschaftliches Rechnen, Universit\"at Graz, Heinrichstrasse 36, 8010 Graz, Austria.}
 \email{charles.beil@uni-graz.at}
 \title[Global dimensions of local geodesic ghor algebras]{Global dimensions of\\ local geodesic ghor algebras}
 \keywords{Dimer algebra, dimer model, Jacobian algebra, quiver with potential, quiver gauge theory.}
 \subjclass[2010]{16G20, 16S38, 16S50}
 \date{}

\begin{abstract}
A ghor algebra is a path algebra with relations of a dimer quiver in a compact surface. 
We show that the global dimension of any cyclic localization of a geodesic ghor algebra on a genus $g \geq 1$ surface is bounded above by $2g+1$.
This number coincides with the Krull dimension of the center of the ghor algebra.
We further show that the bound is an equality if and only if the point of localization sits over the noetherian locus of the center.
\end{abstract}

\maketitle

\tableofcontents

\section{Introduction}

Ghor algebras are a type of noncommutative algebra constructed from oriented graphs on surfaces, called dimer quivers.
In this article we establish new connections between their representation theory, the topology of surfaces, and nonnoetherian algebraic geometry.

Algebras constructed from dimer quivers were introduced in physics in 2005 \cite{HK05, FHVWK06}.
Since then, they have found numerous applications to both mathematics and physics; see for example \cite{DKS24,INU24,CKP24,BKM16,BCQ15,MR10} and references therein. 

A \textit{dimer quiver} $Q$ is a quiver that embeds in a surface $\Sigma$, such that each connected component of $\Sigma \setminus Q$ is simply connected and bounded by an oriented cycle, called a \textit{unit cycle}.
The \textit{dimer algebra} of $Q$ is then the quotient $kQ/I$ of the path algebra $kQ$ by the ideal
\begin{equation} \label{I}
I = \left\langle p - q \ | \ \exists a \in Q_1 \text{ such that } pa, qa \text{ are unit cycles} \right\rangle \subset kQ,
\end{equation}
where $p,q$ are paths and $Q_1$ is the set of arrows of $Q$. 
Dimer algebras on a torus have exceptional algebraic and homological properties (see for example \cite{B21,Br12,D11,BCQ15}).
In particular, let $A = kQ/I$ be a noetherian dimer algebra on a torus, let $R$ be its center, and let $\mathfrak{m}$ be the maximal ideal at the origin of the algebraic variety $\operatorname{Max}R$ of $R$.\footnote{A dimer algebra on a torus is noetherian if and only if it is cancellative \cite{B21}, if and only if it satisfies certain combinatorial conditions called `consistency' \cite{D11}, if and only if it is Calabi-Yau \cite{Br12}.}
Then the Krull dimension of $R$ and the global dimension of the localization $A_{\mathfrak{m}} := A \otimes_R R_{\mathfrak{m}}$ coincide,
\begin{equation} \label{gldim = dim}
\operatorname{gldim} A_{\mathfrak{m}} = 3 = \dim R = \operatorname{ht}(\mathfrak{m}). %\dim R_{\mathfrak{m}}.
\end{equation}
However, this remarkable relationship between global dimension and Krull dimension disappears on higher genus surfaces.

Indeed, if $kQ/I$ is a dimer algebra on any 
surface of genus $\geq 2$, then its center is the polynomial ring in one variable, $Z(kQ/I) \cong k[\sigma]$ (Lemma \ref{lm:no dimer}). Thus, there can be no interesting relationship of the form (\ref{gldim = dim}).
We therefore desire a new kind of algebra, constructed from dimer quivers, that is able to maintain such a relationship on higher genus surfaces. 
The purpose of this article is to identify a class of algebras with this property.

The algebras we will consider are quotients of dimer algebras called `ghor algebras', introduced in \cite{B24}. 
A ghor algebra $A$ is a path algebra of a dimer quiver $Q$ on a surface $\Sigma$, modulo relations determined by the perfect matchings of $Q$; precise definitions will be given in Section~\ref{geodesic ghor algebra section}.
A ghor algebra is intimately related to the topology of the surface its quivers is embedded in.
For example, homotopic paths in $Q$ are identified in $A$, up to unit cycles. 

In this article we will restrict our attention to ghor algebras that are geodesic, meaning that at each vertex of $Q$ there is a `straight line path' along each direction of $\Sigma$.
This is a very natural property as it generalizes the key notion of cancellativity (or consistency) of dimer algebras on a torus to surfaces of arbitrary genus. 

Let $A$ be a geodesic ghor algebra, and let $R$ be its center.
An essential role in our study will be played by the cycle algebra $S$ of $A$, defined in (\ref{cycle algebra def}).
This is a noetherian subalgebra of $k[\mathcal{P}]$ generated by the cycles in $Q$, where $\mathcal{P}$ is the set of perfect matchings of $Q$.
The center $R$, in turn, is a subalgebra of $S$ \cite[Corollary 3.15]{BB21a}.

If the surface is a torus, then $R = S$ \cite[Theorem 1.1]{B21}.
On higher genus surfaces, however, $R$ is almost never noetherian, and so $R$ is properly contained in $S$.
Nevertheless, the geometry of $R$ can be understood from the geometry of $S$ \cite[Theorem 4.7]{BB21a}.\footnote{For this statement, the ground field $k$ is assumed to be uncountable.} For this reason, $S$ is called a depiction of $R$.
Specifically, $\operatorname{Max}R$ may be identified with the algebraic variety $\operatorname{Max}S$, except that some positive dimensional subvarieties of $\operatorname{Max}S$ become closed points \cite{B16}.
The complement of these subvarieties is called the \textit{noetherian locus}, denoted $U_{S/R} \subset \operatorname{Max}S$, and is the open dense set where $R$ and $S$ locally coincide, 
\begin{equation} \label{noetherian locus}
U_{S/R} := \left\{ \mathfrak{n} \in \operatorname{Max}S \, | \, R_{\mathfrak{n} \cap R} = S_{\mathfrak{n}} \right\} = \left\{ \mathfrak{n} \in \operatorname{Max}S \, | \, R_{\mathfrak{n} \cap R} \text{ is noetherian} \right\}.
\end{equation}
Localized at points in this locus, $A$ becomes an endomorphism ring of a module over its center \cite[Theorem 4.15]{BB21a}.

Our main result is a bound on the global dimension in terms of the topology of the surface.\footnote{The algebra homomorphism $\eta$ in the statement of the theorem is defined in (\ref{eta map}) below.}

\begin{Theorem*} (Theorem \ref{main corollary})
Let $\Sigma$ be a smooth compact genus $g \geq 1$ surface.
Let $A = kQ/\ker \eta$ be a geodesic ghor algebra on $\Sigma$ with center $R$ and cycle algebra $S$. 
Then for each maximal ideal $\mathfrak{n} \in \operatorname{Max}S$, the global dimension of the cyclic localization $A_{\mathfrak{n}}$ is bounded above by the Krull dimensions of $R$ and $S$,
\begin{equation*}
\operatorname{gldim} A_{\mathfrak{n}} \leq \dim R = \dim S = \operatorname{rank} H_1(\Sigma) + 1 = 2g+1,
\end{equation*}
with equality if, and only if for $g \geq 2$, $\mathfrak{n}$ is in the noetherian locus $U_{S/R} \subset \operatorname{Max}S$.
\end{Theorem*}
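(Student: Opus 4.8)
\emph{Step 1: the dimension equalities.} That $\operatorname{rank} H_1(\Sigma) = 2g$ is classical for a compact orientable genus-$g$ surface. By the definition \eqref{cycle algebra def}, $S$ is the affine monoid algebra $k[\mathcal{C}]$ on the submonoid $\mathcal{C} \subseteq \mathbb{Z}^{\mathcal{P}}$ generated by the exponent vectors of the cyclic paths of $Q$, so $\dim S = \operatorname{rank}(\mathbb{Z}\mathcal{C})$. The plan is to identify $\mathbb{Z}\mathcal{C} \cong H_1(\Sigma) \oplus \mathbb{Z}$: the $\mathbb{Z}$-summand is spanned by the class of $\sigma$ (equivalently, of any unit cycle), and passing to the quotient by it, the homology class of a cyclic path induces an isomorphism onto $H_1(\Sigma)$ --- surjectivity because the geodesic hypothesis produces straight-line cycles in every direction, and injectivity because a null-homologous cyclic path differs from a power of a unit cycle by a relation of $A$. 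This gives $\dim S = 2g+1$, and $\dim R = \dim S$ because $S$ depicts $R$ and a depiction has the same Krull dimension as the ring it depicts (\cite[Theorem 4.7]{BB21a}, \cite{B16}).

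\emph{Step 2: the upper bound $\operatorname{gldim} A_{\mathfrak n} \le 2g+1$.} Since $\operatorname{gldim} A_{\mathfrak n} = \max_{v \in Q_0} \pd_{A_{\mathfrak n}} \mathbb{S}_v$, where $\mathbb{S}_v$ denotes the simple $A_{\mathfrak n}$-module at the vertex $v$, it suffices to build for each $v$ a projective $A_{\mathfrak n}$-resolution of $\mathbb{S}_v$ of length at most $2g+1$. This is the technical heart of the theorem. The syzygies should be organised topologically: $\sigma$ is a central non-zero-divisor and accounts for one homological degree (when $\sigma \in \mathfrak n$ one can make this precise via the change-of-rings bound $\operatorname{gldim} A_{\mathfrak n} \le \operatorname{gldim}(A_{\mathfrak n}/\sigma A_{\mathfrak n}) + 1$), while a fixed geodesic system of $2g$ cyclic paths representing a basis of $H_1(\Sigma)$ controls the remaining syzygies, so that the $i$-th syzygy of $\mathbb{S}_v$ is assembled from words of length $i$ in these cycles. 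The geodesic hypothesis is what makes the relevant straight-line cycles available at each vertex and lets one describe the syzygies combinatorially; the single surface relation among the $H_1$-generators is what forces the resolution to terminate --- rather than become periodic --- after $2g+1$ steps.

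\emph{Step 3: equality holds exactly on the noetherian locus (for $g \ge 2$).} Suppose first $\mathfrak n \in U_{S/R}$, so $R_{\mathfrak n \cap R} = S_{\mathfrak n}$ by \eqref{noetherian locus}. Then $A_{\mathfrak n}$ is module-finite over the local ring $S_{\mathfrak n}$, which is Cohen--Macaulay of dimension $2g+1$ (being a localization of the normal affine monoid algebra $S$); moreover, by \cite[Theorem 4.15]{BB21a}, $A_{\mathfrak n} \cong \operatorname{End}_{S_{\mathfrak n}}(M)$ for a reflexive maximal Cohen--Macaulay $S_{\mathfrak n}$-module $M$ that is a generator. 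A module-finite algebra of finite global dimension over a Cohen--Macaulay local ring, which is itself maximal Cohen--Macaulay over that ring, has global dimension equal to the dimension of the ring; combined with $\operatorname{gldim} A_{\mathfrak n} \le 2g+1 < \infty$ from Step 2, this gives $\operatorname{gldim} A_{\mathfrak n} = 2g+1$. Concretely, the simple corresponding to a free summand of $M$ has projective dimension $2g+1$, detected by local duality over $S_{\mathfrak n}$. Conversely, suppose $\mathfrak n \notin U_{S/R}$, which by \eqref{noetherian locus} forces $R_{\mathfrak n \cap R} \subsetneq S_{\mathfrak n}$ and can occur only for $g \ge 2$: a positive-dimensional subvariety of $\operatorname{Max}S$ has collapsed to the closed point $\mathfrak n \cap R$ of $\operatorname{Max}R$. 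I would show this collapse identifies, inside $A_{\mathfrak n}$, the cyclic path that would generate the top syzygy of Step 2 with a product of strictly shorter cyclic paths, whence $\operatorname{Ext}^{2g+1}_{A_{\mathfrak n}}(\mathbb{S}_v, \mathbb{S}_v) = 0$ for every $v$ and the resolutions of Step 2 can be shortened, giving $\operatorname{gldim} A_{\mathfrak n} < 2g+1$. An alternative route is to recognise $A_{\mathfrak n}$ at such a point as a cyclic localization of a geodesic ghor algebra on a surface of genus $g' < g$, for which Step 2 already yields the smaller bound $2g'+1$.

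\emph{The main obstacle.} The crux is Step 2 --- the explicit construction of the length-$(2g+1)$ projective resolutions --- together with the converse direction of Step 3, the strict drop of global dimension off the noetherian locus. Both steps require translating the topology of $\Sigma$ and the non-noetherian geometry of the surjection $\operatorname{Max}S \to \operatorname{Max}R$ into precise statements about syzygies and the vanishing of top Ext groups, and it is there that the geodesic hypothesis and the structure of the depiction $S$ must be used most carefully.
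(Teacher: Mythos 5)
There is a genuine gap at the foundation of your Step 2: you assert that $\operatorname{gldim} A_{\mathfrak n} = \max_{v\in Q_0}\pd_{A_{\mathfrak n}}\mathbb{S}_v$ over the \emph{vertex} simple modules. For a cyclic localization of a ghor algebra this is false as a reduction: $A_{\mathfrak n}$ has simple modules of $k$-dimension up to $|Q_0|$ (Lemma \ref{lm:simple}, Example \ref{ex:simple-modules}), and the whole mechanism of the theorem lives in this distinction. In the paper the reduction is only to \emph{all} simples with the correct central annihilator (Proposition \ref{main0}), and Theorem \ref{pd theorem} then gives a dichotomy: a simple of full dimension vector $1^{Q_0}$ has $\pd = N+1 = 2g+1$ (its resolution is the Koszul-type complex on $H_1(\Sigma)\oplus\mathbb{Z}\sigma$, with no Berenstein--Douglas part since $\mathscr{P}_i=\varnothing$), whereas every smaller simple has $\pd \leq \max\{3,2g\}$, because once $\mathscr{P}_i\neq\varnothing$ the unit-cycle relation $\sigma_i$ is absorbed into the monomial generators of the syzygy (Theorem \ref{main 2}) and only $T_m$, not $T_m^{\sigma}$, is needed, the homotopy relations being handled by the generalized Berenstein--Douglas piece. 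In particular, for $g\geq 2$ a vertex simple never attains $2g+1$; so with your reduction you could never see where equality comes from, and the link to the noetherian locus — which in the paper is exactly Proposition \ref{max dim prop}: a simple of dimension vector $1^{Q_0}$ with annihilator $\mathfrak n$ exists only if $R_{\mathfrak n\cap R}=S_{\mathfrak n}$, i.e. $\mathfrak n\in U_{S/R}$ — disappears from your argument entirely.

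Your Step 3 is also not a proof as it stands. The forward direction (equality on $U_{S/R}$) via ``$A_{\mathfrak n}\cong\operatorname{End}_{S_{\mathfrak n}}(M)$ with $M$ maximal Cohen--Macaulay over a Cohen--Macaulay $S_{\mathfrak n}$, hence $\operatorname{gldim}=\dim$'' is a genuinely different route from the paper's, but it rests on facts not established here: Cohen--Macaulayness of $S_{\mathfrak n}$ and MCM-ness of $M$ (and of $A_{\mathfrak n}$ over $S_{\mathfrak n}$) are not provided by \cite[Theorem 4.15]{BB21a}, so the depth-lemma argument you are implicitly invoking has no footing. The converse direction (strict inequality off $U_{S/R}$) is only a plan (``I would show this collapse identifies\dots'', or reinterpret $A_{\mathfrak n}$ on a lower-genus surface), whereas in the paper it follows at once from the dichotomy above: off the noetherian locus no full-dimension simple with annihilator $\mathfrak n$ exists, so every relevant simple has $\pd\leq\max\{3,2g\}=2g<2g+1$ when $g\geq 2$. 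Your Step 1 (the dimension count $\dim S=\dim R=2g+1$ via $H_1(\Sigma)\oplus\mathbb{Z}\sigma$ and the depiction property) matches what the paper imports from \cite{BB21a,B16} and is fine; and your topological organization of the syzygies in Step 2 is in the right spirit, but without the higher-dimensional simples and the $\mathscr{P}_i$-versus-$\sigma$ analysis it cannot deliver either the sharp bound or the equality criterion.
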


We note that the theorem holds for any surface obtained from identifying the opposite sides, and all of the vertices, of a regular polygon with an even number of sides.
In particular, it also holds for compact surfaces with two points identified. 

We conjecture that the strict inequality $\operatorname{gldim}A_{\mathfrak{n}} < \dim R$ over the nonnoetherian locus $\operatorname{Max}S \setminus U_{S/R}$ generalizes the relation (\ref{gldim = dim}) to 
\begin{equation*}
\operatorname{gldim}A_{\mathfrak{n}} = \operatorname{ght}(\mathfrak{n} \cap R)\le 2g+1,
\end{equation*}
where $\operatorname{ght}(\mathfrak{n} \cap R)$ is the so-called `geometric height' of the maximal ideal $\mathfrak{n} \cap R$ of $R$. 
This new notion of height, introduced in \cite{B16}, corresponds to the codimension of the corresponding positive dimensional point in $\operatorname{Max}R$.\footnote{Specifically, for $\mathfrak{p} \in \operatorname{Spec}R$, $\operatorname{ght}(\mathfrak{p})$ is the infimum of heights of ideals sitting over $\mathfrak{p}$ over all depictions of $R$.}

\section{Preliminaries:\ geodesic ghor algebras} \label{geodesic ghor algebra section}

Throughout, $k$ is an algebraically closed field. 

Let $\Sigma$ be a compact orientable surface of genus $g \geq 0$, either smooth or pinched at one point (i.e., with two points identified).
We allow pinching so that we have a surface of Euler characteristic $\chi(\Sigma) = -n$ for each positive integer $n \in \mathbb{N}$, rather than just the even integers that come from smooth surfaces, where $\chi(\Sigma) = 2 - 2g$.
This generality is important for ghor algebras because the Krull dimensions of their centers are directly related to the Euler characteristic. 
The surface $\Sigma$ may be obtained by identifying the opposite sides, and all of the vertices, of a convex $2N$-gon $P$.
If $\Sigma$ is smooth, then $N = 2g$ is even, and if $\Sigma$ is pinched, then $N = 2g+1$ is odd.\footnote{If we only require the opposite sides of $P$ to be identified, but not all the vertices, then all the vertices will nevertheless be automatically identified if $N$ is even, but not if $N$ is odd. 
In the case $N$ is odd, that is, $N = 2g+1$, $\Sigma$ will be a smooth genus $g$ surface just as in the even case.}
($P$ is a so-called fundamental domain or fundamental polygon of $\Sigma$.) 
 
Let $Q$ be a dimer quiver embedded in $\Sigma$.
A \textit{perfect matching} of $Q$ is a set of arrows $x \subset Q_1$ such that each unit cycle contains precisely one arrow in $x$.
Furthermore, a perfect matching $x \in \mathcal{P}$ is \textit{simple} if, given any pair of vertices $i,j \in Q_0$, there is a path from $i$ to $j$ which avoids all arrows of $x$ (that is, $Q \setminus x$ is the support of a simple module of dimension vector $1^{Q_0}$). 
Example~\ref{ex:first-example} gives some (simple) perfect matchings. 
We denote by $\mathcal{P}$, $\mathcal{S}$ the set of perfect and simple matchings of $Q$ respectively, and by $k[\mathcal{P}]$, $k[\mathcal{S}]$ the polynomial rings with generating sets $\mathcal{P}$, $\mathcal{S}$.

\begin{Definition} \cite{B24} 
Let $Q$ be a dimer quiver embedded in a surface $\Sigma$.
Denote by $n := |Q_0|$ the number of vertices of $Q$, and by $e_{ij} \in M_n(k)$ the $n \times n$ matrix with entry $1$ in the $ij$-th slot and zeros elsewhere.
Consider the algebra homomorphism $\eta: kQ \to M_n(k[\mathcal{P}])$ 
defined on the vertices $i \in Q_0$ and arrows $a \in Q_1$ by
\begin{equation} \label{eta map}
\begin{array}{c}
\eta(e_i) = e_{ii}, \ \ \ \ \ \ \ \ \eta(a) = e_{\operatorname{h}(a),\operatorname{t}(a)} \prod_{\substack{x \in \mathcal{P} :\\ x \ni a}} x,\\
\end{array}
\end{equation}
and extended multiplicatively and $k$-linearly to $kQ$.
The \textit{ghor algebra} of $Q$ is the quotient\footnote{The word `ghor' is Klingon for surface.}
\begin{equation*}
A := kQ/\operatorname{ker}\eta.
\end{equation*}
\end{Definition}

Using simple matchings, we define the algebra homomorphism $\tau: kQ \to M_n(k[\mathcal{S}])$ similar to $\eta$ except that $\mathcal{P}$ is replaced by the smaller set $\mathcal{S}$ in (\ref{eta map}). 

For $i,j \in Q_0$ and $p \in e_jkQe_i$ (or, by abuse of notation, $p \in e_jAe_i$), denote by $\bar{\eta}(p)$ resp.\ $\bar{\tau}(p)$ the single nonzero matrix entry of $\eta(p)$ resp.\ $\tau(p)$. 
We will often write $\overbar{p}$ for $\bar{\eta}(p)$ or $\bar{\tau}(p)$.
Then, for $x \in \mathcal{P}$ and an arrow $a \in Q_1$, we have
\begin{equation*}
x \ni a \ \ \ \ \Longleftrightarrow \ \ \ \ x \mid \overbar{a}.
\end{equation*}

Ghor algebras $A = kQ/\operatorname{ker}\eta$ are quotients of dimer algebras $kQ/I$ since $I \subseteq \operatorname{ker}\eta$.
Indeed, if $pa$, $qa$ are unit cycles with $a \in Q_1$ an arrow, then the paths $p$ and $q$ are labeled by monomials $\overbar{p}$, $\overbar{q}$ that include precisely one variable for each perfect matching which does not contain $a$,
\begin{equation*}
\overbar{p} = \prod_{\substack{x \in \mathcal{P} :\\ x \not \ni a}} x = \overbar{q}.
\end{equation*}
Therefore $\eta(p) = e_{\operatorname{h}(p),\operatorname{t}(p)} \overbar{p} = \eta(q)$, so $p - q \in \ker \eta$.

We illustrate the difference between dimer algebras and ghor algebras in the following example. 
For a dimer quiver $Q$ on a surface, we denote by $Q^+$ the lift of $Q$ to the universal cover of the surface and call it the {\em covering quiver of $Q$}.

\begin{figure}
\centering \includegraphics[width=1.0\linewidth]{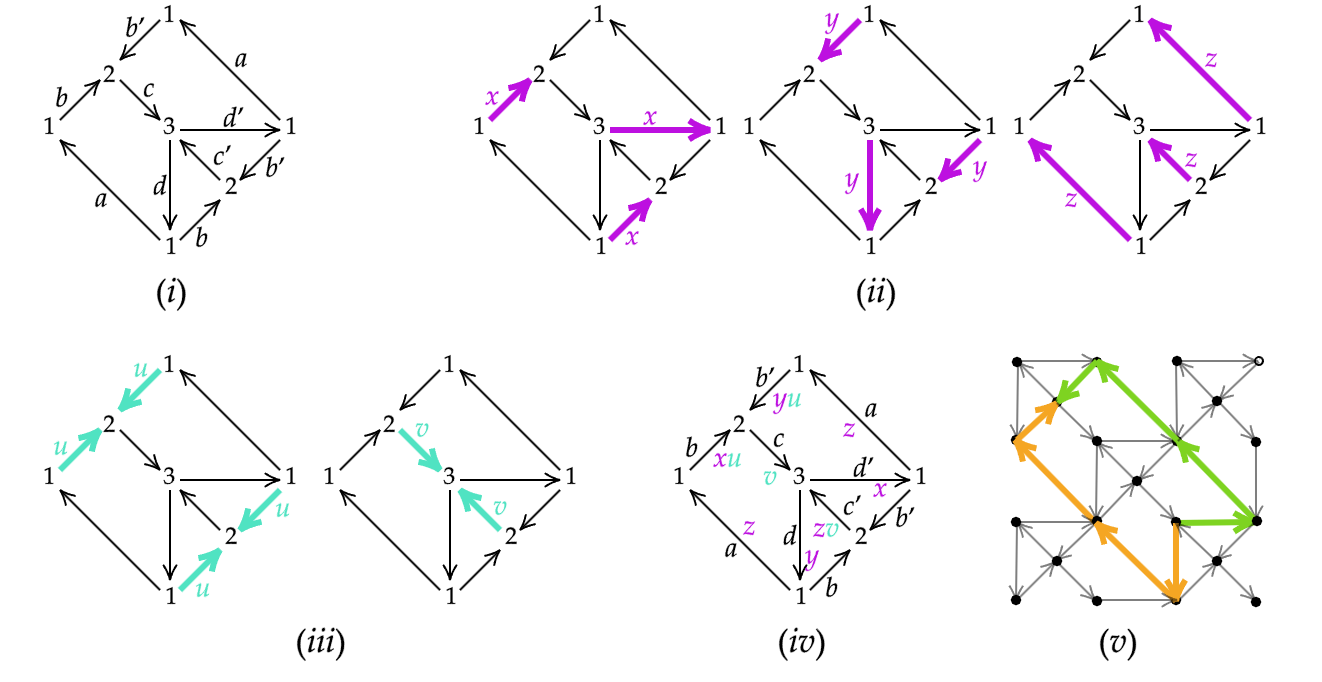}
%\vspace*{-1.0cm}
\caption{The dimer quiver $Q$ and its perfect matchings for Example \ref{ex:first-example}.}
\label{matchingfigure}
\end{figure}

\begin{Example} \label{ex:first-example} 
Consider the dimer quiver $Q$ on a torus in Figure \ref{matchingfigure}.
A fundamental domain of $Q$ is shown in (i).
We claim that for this quiver, the ghor algebra $A = kQ/\ker \eta$ and dimer algebra $kQ/I$ of $Q$ are different.

To determine $A$, observe that there are five perfect matchings of $Q$: three which are simple, labeled $x,y,z$ in (ii), and two which are not, labeled $u,v$  in (iii).
The matching $u$ resp.\ $v$ is not simple because $Q \setminus u$ resp.\ $Q \setminus v$ has a source at vertex $2$ resp.\ $3$.
These matchings yield the monomial labeling of arrows given in (iv).
The ghor algebra $A$ is then obtained by identifying any two paths with coincident tails, coincident heads, and equal monomial labelings.
For example, the orange path $baad$ and green path $b'aad'$ in (v), drawn on the cover $Q^+$, have equal monomial labelings,
\begin{equation*}
    \overbar{baad} = (xu)zzy = (yu)zzx = \overbar{b'aad'},
\end{equation*}
and are therefore equal in $A$ since the variables 
$x,y,z,u,v$ commute.
These two paths, however, are clearly not equal in the dimer algebra $kQ/I$.

The algebra homomorphism $\eta$ in (\ref{eta map}) is explicitly given by 
$\eta(e_i) = e_{ii}$ and
\begin{equation*}
\begin{gathered}
\eta(a) = \left(\begin{smallmatrix} z & 0 & 0 \\ 0 & 0 & 0 \\ 0 & 0 & 0 \end{smallmatrix} \right), \ \ \ 
\eta(b) = \left( \begin{smallmatrix} 0 & 0 & 0 \\ xu & 0 & 0 
 \\ 0 & 0 & 0 \end{smallmatrix} \right), \ \ \ 
\eta(b') = \left( \begin{smallmatrix} 0 & 0 & 0 \\ yu & 0 & 0 
 \\ 0 & 0 & 0 \end{smallmatrix} \right),\\ 
 \eta(c) = \left( \begin{smallmatrix} 0 & 0 & 0 \\ 0 & 0 & 0 
 \\ 0 & v & 0 \end{smallmatrix} \right),
  \ \ \ 
\eta(c') = \left( \begin{smallmatrix} 0 & 0 & 0 \\ 0 & 0 & 0 
 \\ 0 & zv & 0 \end{smallmatrix} \right), \ \ \ 
 \eta(d) = \left(\begin{smallmatrix} 0 & 0 & y \\ 0 & 0 & 0 \\ 0 & 0 & 0 \end{smallmatrix} \right), \ \ \ 
\eta(d') = \left( \begin{smallmatrix} 0 & 0 & x \\ 0 & 0 & 0 
 \\ 0 & 0 & 0 \end{smallmatrix} \right).
 \end{gathered}
\end{equation*}
Note that the row resp.\ column of each nonzero matrix entry is specified by the head resp.\ tail of the corresponding arrow (with $Q_0 = \{ 1,2,3\}$). 
The algebra homomorphism $\tau$ is then obtained by setting $u = v = 1$ in $\eta$. 
\end{Example} 

Let $Q$ be a dimer quiver on $\Sigma$, and let $A = kQ/\ker \eta$ be its corresponding ghor algebra.
The first homology group $H_1(\Sigma) := H_1(\Sigma, \mathbb{Z})$ of $\Sigma$ plays a prominent role in the structure of $A$. 
Recall that $H_1(\Sigma)$ is the free abelian group on cycles modulo homology relations:\ two cycles $p, q$ are homologous if there is an oriented surface in $\Sigma$ whose boundary is $p - q$ (i.e. cutting the surface along $p$ and backwards along $q$ cuts the surface in two)\footnote{Note that the concatenation of cycles $p,q$ in $kQ$ is written multiplicatively, $pq$, whereas the concatenation of cycles in $H_1(\Sigma)$ is written additively, $p + q$.}. 
Figure~\ref{fig:octagon}.i shows an example of two cycles $p,q$ that are homologous but not homotopic.

Using $H_1(\Sigma)$, we call $A$ geodesic if there are enough `straight line paths' in $Q$:
 
\begin{Definition}\label{geodesic def} \cite[Definition 2.6]{BB21a}
\begin{itemize}
 \item[(i)] A cycle $p$ in $Q$ is \textit{geodesic} if there does not exist a cyclic permutation of a representative of $p+\ker\eta$ whose lift to $Q^+$ has a nontrivial cycle subpath.\\
 Two cycles in $Q$ are \textit{parallel} if they do not transversely intersect.
 \item[(ii)] 
We call $Q$ and $A$ \textit{geodesic} if for each cycle $p$ with no nontrivial cycle subpath modulo $\ker \eta$, there is a set $C_{[p]}$ of parallel geodesic cycles that covers $Q_0$ and contains a cycle $q \in C_{[p]}$ with $[q] = [p] \in H_1(\Sigma)$.
\end{itemize}
\end{Definition}

\begin{figure}
\centering \includegraphics[width=1.0\linewidth]{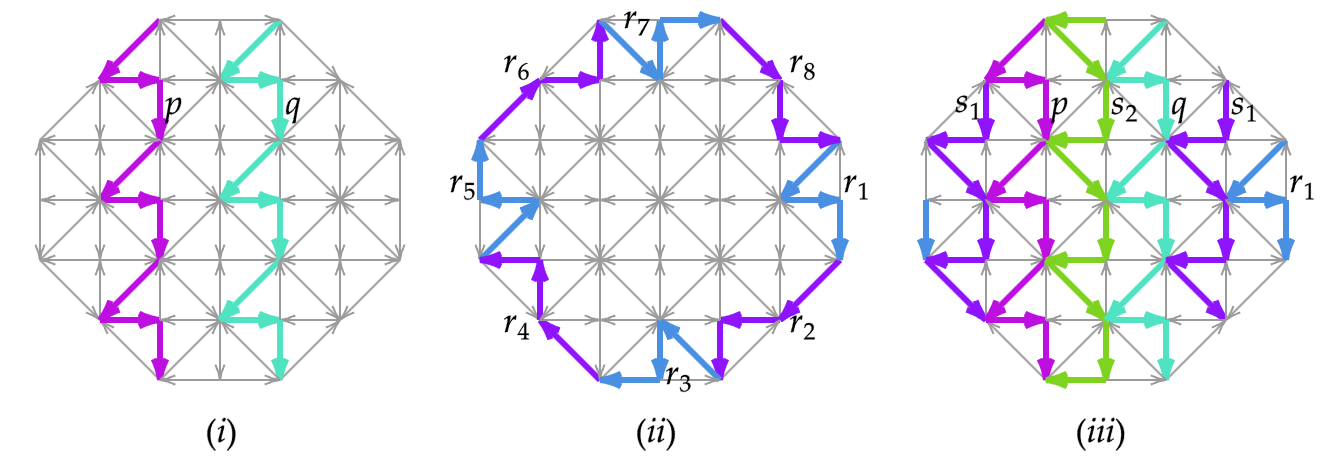}
%\vspace*{-1.0cm}
\caption{ 
A geodesic ghor algebra on a genus $2$-surface $\Sigma$, obtained by identifying the opposite sides, and the corner vertices, of the octagon.
(i) The two cycles $p,q$ are geodesic, and homologous but not homotopic. 
(ii) A set of geodesic cycles $r_1, \ldots, r_8$ 
that spans $H_1(\Sigma)$ over $\mathbb{N}$. 
(iii) A set of parallel geodesic cycles $C_{[p]} = \{p,q, r_1, s_1,s_2\}$ that covers $Q_0$ and contains $p$.}
\label{fig:octagon}
\end{figure}

Observe that the orange and green cycles in Figure~\ref{matchingfigure}.v are not geodesic.
In fact, the ghor algebra itself in Example~\ref{ex:first-example} 
is not geodesic since there is no set $C_{[a]}$ of parallel geodesic cycles that covers $e_2,e_3$ and contains $a$.
In contrast, the ghor algebra on a genus $2$ surface in Figure~\ref{fig:octagon} is geodesic \cite[3.2]{BB21}. 
The two homologous cycles $p,q$ in (i) are both geodesic, and (iii) shows a set $C_{[p]}$ of parallel geodesic cycles that covers $Q_0$ and contains $p$. 
Similarly, the ghor algebra on a pinched torus in Figure~\ref{fig:simple-modules} below is geodesic \cite[3.3]{BB21}.

It was shown in \cite[Lemma 4.3]{BB21a} that noetherian ghor algebras on a torus (i.e., Calabi-Yau dimer algebras) are geodesic. If $A$ is geodesic, then $\ker \eta = \ker \tau$ \cite[Corollary 3.14]{BB21a}.
In this case, it suffices to consider the smaller set of simple matchings, rather than all perfect matchings, to determine the relations of $A$.

\begin{Notation} \label{not:sigma}
We denote by $\sigma_i$ a unit cycle at vertex $i \in Q_0$.
Observe that the monomial $\bar{\eta}(\sigma_i)$ resp.\ $\bar{\tau}(\sigma_i)$ is the product of all perfect resp.\ simple matchings, and thus does not depend on $i$ (see Example~\ref{ex:sigma-cycle} below). 
Consequently, all of the unit cycles at a fixed vertex are equal in $A$. 
To shorten notation, we write $\sigma$ for the monomial 
$\bar{\eta}(\sigma_i)$ or $\bar{\tau}(\sigma_i)$, depending on the context. 
\end{Notation}

\begin{Example}\label{ex:sigma-cycle}
For the ghor algebra given in Example \ref{ex:first-example},
we have $\sigma = xyzuv$ since, for example,
\begin{equation*}
\eta(\sigma_1) = \eta(d')\eta(c')\eta(b') 
= \left( \begin{smallmatrix} 0 & 0 & x \\ 0 & 0 & 0 
 \\ 0 & 0 & 0 \end{smallmatrix} \right)
 \left( \begin{smallmatrix} 0 & 0 & 0 \\ 0 & 0 & 0 
 \\ 0 & zv & 0 \end{smallmatrix} \right)
 \left( \begin{smallmatrix} 0 & 0 & 0 \\ yu & 0 & 0 
 \\ 0 & 0 & 0 \end{smallmatrix} \right)
= \left( \begin{smallmatrix} 
xyzuv & 0 & 0 \\ 0 & 0 & 0 \\ 0 & 0 & 0
\end{smallmatrix}
\right)
= \left( \begin{smallmatrix} 
\sigma & 0 & 0 \\ 0 & 0 & 0 \\ 0 & 0 & 0
\end{smallmatrix}
\right).
\end{equation*}
By cyclically permuting $\sigma_1 = d'c'b'$, we similarly have
\begin{equation*}
\eta(\sigma_2) = \eta(b')\eta(d')\eta(c') = \left( \begin{smallmatrix} 
0 & 0 & 0 \\ 0 & \sigma & 0 \\ 0 & 0 & 0
\end{smallmatrix}
\right), 
\ \ \ \ \
\eta(\sigma_3) = \eta(c')\eta(b')\eta(d') = \left( \begin{smallmatrix} 
0 & 0 & 0 \\ 0 & 0 & 0 \\ 0 & 0 & \sigma
\end{smallmatrix}
\right).
\end{equation*}
\end{Example}

Geodesic ghor algebras reflect the topology of the surface in which they are embedded:\ suppose $A$ is geodesic, and let $p,q$ be cycles in $Q$.
Then by \cite[Theorem 3.11]{BB21a},
\begin{equation} \label{p = q sigma}
[p] = [q] \ \ \ \ \Longleftrightarrow \ \ \ \ \overbar{p} = \overbar{q}\sigma^{\ell} \text{ for some } \ell \in \mathbb{Z}.
\end{equation}
In particular, if $\Sigma$ is smooth, then $p$ and $q$ are homologous if and only if $\overbar{p} = \overbar{q} \sigma^{\ell}$ for some $\ell \in \mathbb{Z}$.

The main tools we will use in our study of ghor algebras are the two subalgebras of the polynomial ring $k[\mathcal{S}]$,
\begin{equation} \label{cycle algebra def}
R := k[\cap_{i \in Q_0} \bar{\tau}(e_iAe_i)] \ \ \ \ \text{ and } \ \ \ \  
S := k[\cup_{i \in Q_0} \bar{\tau}(e_iAe_i)].
\end{equation}
By \cite[Corollary 3.15]{BB21a}, $R$ is isomorphic to the center of $A$.
We call the finitely generated overring $S$ of $R$ the \textit{cycle algebra} of $A$. 

We conclude with a lemma that motivates why we consider ghor algebras in place of dimer algebras on surfaces with nonzero curvature. Specifically, we are interested in relationships between the geometry of the center of the algebra and the topology of the surface, and the lemma shows that the center of a dimer algebra on a curved surface $\Sigma$ has no relation to any topological properties of $\Sigma$, such as its genus.

\begin{Lemma} \label{lm:no dimer}
Let $kQ/I$ be a dimer algebra on a surface $\Sigma$ of genus $g \geq 2$.
Then its center $Z(kQ/I)$ is the polynomial ring in one variable, $Z(kQ/I) \cong k[\sigma]$.
\end{Lemma}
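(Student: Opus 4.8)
Write $B:=kQ/I$ and $\sigma:=\sum_{i\in Q_0}\sigma_i\in B$; the plan is to prove the two inclusions $k[\sigma]\subseteq Z(B)$ and $Z(B)\subseteq k[\sigma]$, the hypothesis $g\ge 2$ entering only in the second. For the first, one first checks that all unit cycles at a fixed vertex agree in $B$: if $pc$ and $qc$ are the two unit cycles running through an arrow $c$, then $p=q$ in $B$, and circling a vertex shows all its unit cycles coincide. Hence for an arrow $a\colon i\to j$ one may take $\sigma_i$ and $\sigma_j$ to be the two boundary cycles of a single face adjacent to $a$, and then $a\sigma_i$ and $\sigma_j a$ are literally the same path of $Q$; since $a\sigma_{i'}=0$ for $i'\neq i$ and $\sigma_{j'}a=0$ for $j'\neq j$, this gives $a\sigma=\sigma a$ for every arrow, so $\sigma$ is central. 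Applying the homomorphism $\eta$ of (\ref{eta map}) gives $\eta(\sigma^{\ell})=\big(\prod_{x\in\mathcal P}x\big)^{\ell}\mathrm{Id}_n$, and since the monomials $\big(\prod_{x\in\mathcal P}x\big)^{\ell}$, $\ell\ge 0$, are pairwise distinct in $k[\mathcal P]$, the powers $\sigma^{\ell}$ are linearly independent in $B$; thus $k[\sigma]\cong k[x]$.

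For $Z(B)\subseteq k[\sigma]$, I would exploit the fact that $B$ is graded by the fundamental groupoid of $\Sigma$. Two faces of $Q$ sharing an arrow glue to a disk, so whenever $pa,qa$ are unit cycles the paths $p,q$ are homotopic rel endpoints in $\Sigma$; hence the homotopy-rel-endpoints grading on $kQ$ descends to $B$, so $e_iBe_i=\bigoplus_{g\in\pi_1(\Sigma,i)}(e_iBe_i)_g$ with $\sigma_i\in(e_iBe_i)_1$. Let $z\in Z(B)$ and set $z_i:=e_ize_i$, so $z=\sum_i z_i$ and each $z_i$ is central in $e_iBe_i$. Comparing graded components of $cz_i=z_ic$ for a cycle $c$ at $i$ of class $\gamma$ gives $cz_{i,g}=z_{i,\gamma g\gamma^{-1}}c$, so the finite set $F_i$ of homotopy classes occurring in $z_i$ is stable under conjugation by $\gamma$; since $Q$ is strongly connected with simply connected complementary faces, the classes of cycles at $i$ generate $\pi_1(\Sigma,i)$, so $F_i$ is closed under all conjugation. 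For $g\ge 2$ the surface group $\pi_1(\Sigma)$ is torsion free with trivial center and every nontrivial conjugacy class infinite, so $F_i\subseteq\{1\}$, i.e.\ each $z_i$ is null-homotopic. Since a null-homotopic cycle at $i$ lifts to a closed cycle in $Q^+$ bounding a disk tiled by lifts of faces, and reduces via the relations there to a power of $\sigma_i$, one has $(e_iBe_i)_1=k[\sigma_i]$, so $z_i=p_i(\sigma_i)$ for a polynomial $p_i$. Finally $az_i=z_ja$ reads $\big(p_i(\sigma_j)-p_j(\sigma_j)\big)a=0$; applying $\eta$ and using that the elements $\eta(\sigma_j^{\ell}a)$, $\ell\ge 0$, are linearly independent in $M_n(k[\mathcal P])$ forces $p_i=p_j$, and connectedness of $Q$ then yields a single polynomial $p$ with $z=\sum_i p(\sigma_i)=p(\sigma)\in k[\sigma]$.

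The step I expect to require genuine work is the conjugation-closure of $F_i$: to pass from $cz_{i,g}=z_{i,\gamma g\gamma^{-1}}c$ with $z_{i,g}\neq 0$ to the conclusion $\gamma g\gamma^{-1}\in F_i$, one needs that some cycle of class $\gamma$ does not annihilate the component $z_{i,g}$. This is automatic when $B$ has no zero divisors — in particular when $B$ is cancellative — so one natural route is to reduce to that case; in general it requires an analysis of the zero divisors of $B$, or a different argument that no central element of $B$ can carry a nontrivial homotopy component when $g\ge 2$. This, I think, is the crux. The remaining ingredients used above — agreement of the unit cycles at a vertex, homotopy-homogeneity of the relations, strong connectedness of $Q$, and the identification $(e_iBe_i)_1=k[\sigma_i]$ — are standard structural facts about dimer algebras, to be cited or supplied as short lemmas.
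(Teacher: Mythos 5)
Your inclusion $k[\sigma]\subseteq Z(kQ/I)$ and your use of the homotopy-rel-endpoints grading (the generators $p-q$ of $I$ are homogeneous for it) are fine, and the latter is also the engine of the paper's argument. But your proof of the reverse inclusion has a genuine gap, and it is exactly the one you flagged: from $cz_{i,g}=z_{i,\gamma g\gamma^{-1}}c$ you can only conclude $\gamma g\gamma^{-1}\in F_i$ when $cz_{i,g}\neq 0$, and the escape route you suggest --- reduce to the case where $kQ/I$ has no zero divisors, e.g.\ is cancellative --- is not available. Cancellative (consistent) dimer algebras exist essentially only on the torus; for $g\geq 2$ the algebras $kQ/I$ covered by the lemma are precisely the non-cancellative ones, with plenty of zero divisors, and the lemma is needed in that generality (it is the paper's justification for abandoning dimer algebras in favour of ghor algebras on higher-genus surfaces). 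A second ingredient you treat as a ``standard structural fact'' is also not available without consistency: the claim that a null-homotopic cycle at $i$ reduces via the relations to a power of $\sigma_i$, i.e.\ $(e_iBe_i)_1=k[\sigma_i]$. Homotopic paths need not be equal in a non-cancellative dimer algebra --- the paper's Example~\ref{ex:first-example} exhibits two rel-endpoint homotopic paths $baad$ and $b'aad'$ that differ in $kQ/I$ --- so this identification requires proof and, in general, fails as stated.

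The paper's proof is arranged to sidestep both issues. Given a central $z\notin k$, it reduces to a cycle $p=ze_i$ and splits into two cases on the homology class. If $[p]\neq 0$, the hypothesis $g\geq 2$ produces a cycle $q$ at $i$ such that $pq$ and $qp$ are homologous but not homotopic; since the relations are homotopy relations (your grading observation), the two \emph{paths} $pq$ and $qp$ are distinct and nonzero in $kQ/I$ --- no zero-divisor problem arises because one compares two honest paths rather than graded components that might be killed by multiplication --- contradicting centrality. If $[p]=0$ but $p\neq\sigma_i^n$, the paper does not try to prove $p$ equals a power of $\sigma_i$; instead it takes a lift $p^+$ bounding a region $\mathcal{R}_p$ with a maximal number of unit cycles and produces an arrow $a$ outside $\mathcal{R}_p$ with $ap$ not rewritable as $p'a$ using the relations, again contradicting centrality. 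To repair your argument you would need either a genuine analysis of which products $cz_{i,g}$ can vanish in a non-cancellative dimer algebra and a proof of the null-homotopic-cycle claim there, or to replace the conjugation step by a direct non-centrality argument of the paper's type; as written, the crux of the lemma is exactly the part your proposal leaves open.
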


\begin{proof}
First observe that the sum $\sigma$ of unit cycles $\sum_{j \in Q_0} \sigma_j$ is in $Z := Z(kQ/I)$ by the dimer relations $I$ in (\ref{I}).
Thus, $k[\sigma]$ is isomorphic to a subalgebra of $Z$.

Let $z \in Z \setminus k$ be another central element.
We may assume that $p = ze_i$ is a cycle since $ze_i = ze_i^2 = e_ize_i$, and $I$ is generated by differences of paths.

(i) Assume to the contrary that $[p] \not = 0$ in $H_1(\Sigma)$.
Since $g \geq 2$ and $[p] \not = 0$, there is a cycle $q \in e_i(kQ/I)e_i$ for which the cycles $pq$ and $qp$ are homologous but not homotopic. 
Thus, since each generator of $I$ is a homotopy relation between paths in $Q$, we have $pq \not = qp$ in $kQ/I$.
But then $p$ is not in $Ze_i$, a contradiction.

(ii) So assume to the contrary that $[p] = 0$, but $p \not = \sigma_i^n$ for any $n \geq 1$.
It suffices to suppose that, in the cover of $\Sigma$, no representative of $p$ lifts to a cycle $p^+$ that intersects itself.
Let $\mathcal{R}_p$ be the compact region in the cover bounded by a lift $p^+$ of a representative such that $\mathcal{R}_p$ contains a maximal number of unit cycles. 
By possibly cyclically permuting $p$, we may assume that there is an arrow $a \in Q_1$ whose lift $a^+$ has tail at $\operatorname{t}(p^+)$, lies outside of $\mathcal{R}_p$, and for which $ap$ contains no unit cycle subpath (modulo $I$).
Then, by the maximality of $\mathcal{R}_p$, $ap$ cannot be homotopic to a path $p'a$ using the dimer relations $I$.
Consequently $p$ cannot be in $Ze_i$, again a contradiction. 
\end{proof}

\section{Finite dimensional and simple representations of ghor algebras}

Let $A$ be a $k$-algebra and $V$ a finite dimensional left $A$-module.
Recall that such a module $V$ admits a composition series, that is, a filteration
\begin{equation*} \label{composition}
0 = V_0 \subset V_1 \subset V_2 \subset \cdots \subset V_{\ell} = V
\end{equation*}
with simple factors $V_j/V_{j-1}$.
We denote the representation of $A$ that defines its action on $V_j$ by $\rho_j: A \to \operatorname{End}_k(V_j)$.
Set $\rho := \rho_{\ell}$.

The projective dimension of $V$ is bounded above by the projective dimensions of the simple factors $V_j/V_{j-1}$:
\begin{equation} \label{bound}
\pd_A(V) \leq \max\{ \pd_A(V_j/V_{j-1}) \, | \, j \in [1, \ell] \}.
\end{equation}
Indeed, for each $j \in [1, \ell]$ there is a short exact sequence $0 \to V_{j-1} \to V_j \to V_j/V_{j-1} \to 0$, whence a long exact sequence
\begin{multline*}
0 \to \Hom(V_j/V_{j-1}, - ) \to \Hom(V_j, - ) \to \Hom(V_{j-1}, - ) \\
\to \Ext^1(V_j/V_{j-1}, - ) \to \Ext^1(V_j, - ) \to \Ext^1(V_{j-1}, - ) \to \cdots.
\end{multline*}
Thus, $\pd_A(V_j) \leq \max\{ \pd_A(V_{j-1}), \pd_A(V_j/V_{j-1}) \}$. 
The bound (\ref{bound}) then follows. 

\begin{Lemma} \label{central ann}
If $V$ is indecomposable, then for each $j \in [1, \ell]$ we have
\begin{equation*}
\ann_Z(V_j/V_{j-1}) = \ann_Z(V_1) \in \Max Z.
\end{equation*}
\end{Lemma}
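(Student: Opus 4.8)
The statement has two parts: first, that all composition factors of an indecomposable finite-dimensional module $V$ are annihilated by the same maximal ideal of $Z$; second, that this annihilator is indeed maximal (not just prime). The plan is to exploit the fact that $Z$ is a commutative ring acting centrally, so each $\operatorname{End}_k(V_j)$ receives an algebra map from $Z$, and the image of $Z$ in $\operatorname{End}_k(V)$ is a finite-dimensional commutative $k$-algebra.

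First I would handle maximality. Since $V_1$ is a simple $A$-module, it is in particular a simple module over the commutative ring $\rho_1(Z) \subseteq \operatorname{End}_k(V_1)$; a simple module over a commutative $k$-algebra is one-dimensional over the residue field at a maximal ideal, and since $k$ is algebraically closed and $V_1$ is finite-dimensional, $Z$ acts on $V_1$ through a character $Z \to k$, whose kernel $\mathfrak{m} := \ker(Z \to \operatorname{End}_k(V_1))$ is a maximal ideal of $Z$ with $Z/\mathfrak{m} \cong k$. This gives $\operatorname{ann}_Z(V_1) = \mathfrak{m} \in \operatorname{Max} Z$, and the same argument shows each simple factor $V_j/V_{j-1}$ is annihilated by some maximal ideal $\mathfrak{m}_j$ with $Z/\mathfrak{m}_j \cong k$.

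The heart of the argument is then showing $\mathfrak{m}_j = \mathfrak{m}_1$ for all $j$. The key point is indecomposability of $V$ combined with centrality. For any $z \in Z$ and any $\lambda \in k$, the generalized eigenspace $V^{(z,\lambda)} := \bigcup_{N \geq 1} \ker\big((\rho(z) - \lambda)^N\big)$ is an $A$-submodule of $V$, because $z$ is central and hence $\rho(z)$ commutes with the $A$-action. Since $V$ decomposes as a direct sum of generalized eigenspaces for $\rho(z)$ (Fitting decomposition over the algebraically closed field $k$) and $V$ is indecomposable, exactly one eigenvalue occurs: $\rho(z)$ has a single eigenvalue $\lambda(z)$ on $V$. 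Reading off the action on the composition series, this forces $z$ to act as $\lambda(z)$ on every factor $V_j/V_{j-1}$, so the character $Z \to k$ describing the $Z$-action on $V_j/V_{j-1}$ is the same map $z \mapsto \lambda(z)$ for every $j$; its kernel is therefore independent of $j$. Combining this with the previous paragraph, $\operatorname{ann}_Z(V_j/V_{j-1}) = \mathfrak{m}_1 = \operatorname{ann}_Z(V_1) \in \operatorname{Max} Z$ for all $j \in [1,\ell]$.

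The only step requiring care — and the one I would expect to be the main obstacle if anything is — is the passage from ``$\rho(z)$ has a single generalized eigenvalue on $V$'' to the statement about all composition factors. This is essentially the observation that the generalized eigenvalues of $\rho(z)$, counted with multiplicity, are exactly the eigenvalues of the $\rho_j(z)$ acting on the $V_j/V_{j-1}$, which follows by induction on the length of the composition series using that the characteristic polynomial is multiplicative in short exact sequences. Since $Z$ is commutative, one could even argue more uniformly: the image $\rho(Z)$ is a finite-dimensional commutative $k$-algebra, a product of local rings indexed by its maximal ideals, and indecomposability of $V$ as a module over $A$ (hence over any subalgebra containing $\rho(Z)$ in a compatible way) pins down a single such local factor; all composition factors of $V$ as a $\rho(Z)$-module are then the unique simple of that local factor, which is the claim. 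I would present the generalized-eigenspace version as it is the most self-contained.
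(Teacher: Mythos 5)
Your main argument is correct and follows essentially the same route as the paper: the paper obtains the key fact that a central element acts with a single eigenvalue on the indecomposable finite-length module $V$ by citing Lam's Fitting-type theorem (each $\rho(z)$ is a scalar or nilpotent), whereas you reprove it directly by noting that the generalized eigenspaces of $\rho(z)$ are $A$-submodules, so indecomposability forces a single eigenvalue $\lambda(z)$; both arguments then push this scalar down to every composition factor and conclude that the annihilators agree, and both use Schur's lemma for maximality of $\ann_Z(V_1)$. Your version is a self-contained substitute for the citation, and your passage to the factors via multiplicativity of the characteristic polynomial is fine.

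One justification in your maximality step is wrong as stated: a simple $A$-module $V_1$ is \emph{not} in general a simple module over the commutative image $\rho_1(Z)$. For instance, with $A = M_2(k)$, $Z = k$ and $V_1 = k^2$, the module $V_1$ is simple over $A$ but two-dimensional over $\rho_1(Z) = k$. The conclusion you want, namely that $Z$ acts on $V_1$ through a character $Z \to k$ with kernel in $\Max Z$, is nevertheless true, but the correct reason is Schur's lemma: since $z$ is central, $\rho_1(z)$ is an $A$-module endomorphism of the simple finite-dimensional module $V_1$, and $\operatorname{End}_A(V_1) = k$ because $k$ is algebraically closed; hence $\rho_1(Z) \subseteq k$ and $\ann_Z(V_1)$ is the kernel of a character. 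This is exactly how the paper argues, and with that repair your proof is complete.
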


\begin{proof}
Let $z \in Z$ be a nonzero central element of $A$; then $z$ is an endomorphism of $V$.
Since $V$ is indecomposable and of finite length, $\rho(z)$ is either a scalar multiple of the identity or nilpotent, by \cite[Theorem 19.17]{Lam01}.

If $\rho(z)$ is a scalar multiple of the identity, say $\rho(z) = \alpha$ for some $\alpha \in k$, then $z - \alpha$ is in $\ann_Z(V_j/V_{j-1})$ for each $j \in [1,\ell]$.

So suppose $\rho(z)$ is nilpotent. 
Since $V_j/V_{j-1}$ is simple, $z$ acts on $V_j/V_{j-1}$ by either a scalar multiple of the identity or by zero, by Schur's lemma.
But $\rho(z)$ is nilpotent, and so $z$ cannot act by a scalar multiple of the identity on $V_j/V_{j-1}$.
Thus, $z$ acts by zero on $V_j/V_{j-1}$.
Therefore $z$ is in $\ann_Z(V_j/V_{j-1})$.

Finally, we show that $\ann_Z(V_1)$ is a maximal ideal of $Z$.
The submodule $V_1 \subseteq V$ is simple, so the image $\rho_1(Z)$ is the base field $k$ by Schur's lemma.
Consequently, the kernel of the map $Z \stackrel{\rho_1}{\to} k$ is a maximal ideal of $Z$.
But this kernel is the annihilator $\ann_Z(V_1)$, and therefore $\ann_Z(V_1)$ is a maximal ideal of $Z$.
\end{proof}

Thus, in determining global dimension, it suffices to look at simple modules: 

\begin{Proposition} \label{main0}
Let $V$ be a finite dimensional indecomposable $A$-module.
Then
\begin{equation*}
\pd_A (V) \leq \max \{ \pd_A(U) \, | \, U \, \text{simple and $\ann_Z(U) = \ann_Z(V)$} \}. 
\end{equation*}
\end{Proposition}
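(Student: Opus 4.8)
The plan is to reduce to simple modules via a composition series, combining the elementary projective-dimension bound (\ref{bound}) with Lemma \ref{central ann}. Fix a composition series $0 = V_0 \subset V_1 \subset \cdots \subset V_{\ell} = V$, and set $U_j := V_j/V_{j-1}$, a simple $A$-module. By (\ref{bound}), $\pd_A(V) \leq \max\{ \pd_A(U_j) \mid j \in [1,\ell] \}$, so the whole statement comes down to identifying the central annihilators of the $U_j$ with that of $V$.

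Since $V$ is indecomposable, Lemma \ref{central ann} applies and gives a single maximal ideal $\mathfrak{m} := \ann_Z(V_1) = \ann_Z(U_j) \in \Max Z$ shared by all the composition factors. It then remains to pin down that $\mathfrak{m}$ is the maximal ideal attached to $V$, namely $\mathfrak{m} = \sqrt{\ann_Z(V)}$; equivalently $\mathfrak{m}$ is the unique maximal ideal of $Z$ containing $\ann_Z(V)$, so that for a simple module $U$ (which necessarily has maximal central annihilator, by Schur's lemma, as in the last paragraph of the proof of Lemma \ref{central ann}) the condition $\ann_Z(U) = \ann_Z(V)$ is exactly $\ann_Z(U) = \mathfrak{m}$. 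The inclusion $\ann_Z(V) \subseteq \mathfrak{m}$ is immediate, since any central element annihilating $V$ annihilates the subquotient $V_1$. For the reverse inclusion up to radical I would re-run the dichotomy from Lemma \ref{central ann}: for $z \in \mathfrak{m} = \ann_Z(V_1)$, the endomorphism $\rho(z)$ of the indecomposable finite-length module $V$ is either a scalar $\alpha \in k$ or nilpotent; in the first case restriction to $V_1$ forces $\alpha = 0$ because $z$ kills $V_1$, whence $z \in \ann_Z(V)$, and in the second case $z^m \in \ann_Z(V)$ for some $m \geq 1$. Either way $z \in \sqrt{\ann_Z(V)}$, so $\mathfrak{m} \subseteq \sqrt{\ann_Z(V)}$, and equality holds since $\mathfrak{m}$ is maximal.

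Putting the two steps together, $\pd_A(V) \leq \max\{ \pd_A(U_j) \mid j \in [1,\ell] \} \leq \max\{ \pd_A(U) \mid U \text{ simple}, \ \ann_Z(U) = \ann_Z(V) \}$; the set on the right is nonempty because $V \neq 0$ forces $\ell \geq 1$ and $U_1 = V_1$ is such a module. The only genuinely delicate point — and the one I would be most careful to state correctly — is that $\ann_Z(V)$ itself need not be a maximal ideal when $V$ fails to be semisimple, so the maximum on the right is to be understood over simple modules lying over the unique maximal ideal $\mathfrak{m}$ above $\ann_Z(V)$; once that identification is fixed, the proof is a purely formal consequence of (\ref{bound}) and Lemma \ref{central ann}, requiring no new input about the algebra $A$.
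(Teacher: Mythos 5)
Your proof follows essentially the same route as the paper: a composition series together with the bound (\ref{bound}) and Lemma \ref{central ann}, which forces all composition factors to share the single maximal central annihilator $\ann_Z(V_1)$. The extra care you take in identifying that ideal with $\sqrt{\ann_Z(V)}$ rather than with $\ann_Z(V)$ itself is a correct refinement of a point the paper's two-line proof glosses over, and does not change the argument's substance.
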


\begin{proof}
We have
\begin{align*}
\pd_A (V) & \stackrel{(\textsc{i})}{\leq} \max \{ \pd_A(V_j/V_{j-1}) \, | \, j \in [1, \ell] \}\\
& \stackrel{(\textsc{ii})}{\leq} \max \{ \pd_A(U) \, | \, \text{$U$ simple and $\ann_Z(U) = \ann_Z(V)$} \}, 
\end{align*}
where (\textsc{i}) holds by (\ref{bound}), and (\textsc{ii}) follows from Lemma \ref{central ann}. 
\end{proof}

Now let $A = kQ/\ker \eta$ be a ghor algebra on a surface $\Sigma$, let $V$ be a left $A$-module, and let $0 \subset V_1 \subset \cdots \subset V_{\ell} = V$ be a composition series of $V$.
In the following we show that each simple factor $V_j/V_{j-1}$ has $k$-dimension at most $|Q_0|$. 

\begin{Lemma} \label{lm:simple}
Let $V$ be a simple $A$-module.
Then for each $i \in Q_0$, the $k$-dimension of $e_iV$ is at most $1$, $\operatorname{dim} e_iV \leq 1$.
\end{Lemma}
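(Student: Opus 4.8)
The plan is to reduce the statement to commutative algebra applied to the corner ring $B := e_iAe_i$. I would argue in three steps: (1) $B$ is a commutative integral domain; (2) $e_iV$ is either zero or a simple $B$-module; (3) every simple $B$-module has $k$-dimension $1$. Granting all three, $e_iV$ is either $0$ or isomorphic to $B/\mathfrak{m}$ for a maximal ideal $\mathfrak{m}\subset B$ with $B/\mathfrak{m}\cong k$, so $\dim_k e_iV\le 1$.

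Step (1) is immediate: $\eta\colon A\to M_n(k[\mathcal{P}])$ is injective, since $A=kQ/\ker\eta$, and restricting it to $B$ gives an injective algebra homomorphism $B=e_iAe_i\hookrightarrow e_{ii}M_n(k[\mathcal{P}])e_{ii}\cong k[\mathcal{P}]$, carrying the class of a cycle $p$ at $i$ to its monomial $\overbar{p}$; thus $B$ is isomorphic to a $k$-subalgebra of the polynomial ring $k[\mathcal{P}]$, hence a commutative domain. For step (2), suppose $e_iV\neq 0$ and let $W$ be a nonzero $B$-submodule of $e_iV$. Then $AW$ is a nonzero $A$-submodule of the simple module $V$, so $AW=V$; and since $w=e_iw$ for every $w\in W$, we get $e_iAW=e_iAe_iW=BW\subseteq W$, and therefore $e_iV=e_i(AW)=e_iAW\subseteq W$, so $W=e_iV$. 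Hence $e_iV$ is a (cyclic) simple module over the commutative domain $B$, so $e_iV\cong B/\mathfrak{m}$ for some $\mathfrak{m}\in\Max B$. It is also worth recording that the center $Z\cong R$ acts on $e_iV$ through $z\mapsto ze_i\in B$, and by Lemma~\ref{central ann} this action factors through the field $Z/\ann_Z(V)=k$; in particular the class $\sigma$ of $\sum_j\sigma_j$ acts on $e_iV$ as a scalar.

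Step (3) is the heart of the argument. Since $k$ is algebraically closed, Zariski's lemma tells us that $B/\mathfrak{m}=k$ for every $\mathfrak{m}\in\Max B$ as soon as $B$ is a finitely generated $k$-algebra. I would therefore invoke — or, if needed, establish from the structure of dimer quivers — that the vertex corner $e_iAe_i$ of a ghor algebra is a finitely generated $k$-algebra: its image in $k[\mathcal{P}]$ is the monoid algebra of the submonoid $\{\,[\overbar{p}]\in\mathbb{N}^{\mathcal{P}}\ :\ p\text{ a cycle at }i\,\}$, and one shows that this submonoid has only finitely many indecomposable elements (the same circle of facts that yields finite generation of the cycle algebra $S$). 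Once $B$ is known to be finitely generated, $B/\mathfrak{m}\cong k$ follows, and with it $\dim_k e_iV=\dim_k B/\mathfrak{m}=1$; combined with $e_iV=0$ in the remaining case, this gives $\dim_k e_iV\le 1$. A useful partial shortcut: whenever $\mathfrak{m}\,k[\mathcal{P}]\neq k[\mathcal{P}]$ one may bypass finite generation, choosing a maximal ideal $\mathfrak{n}$ of $k[\mathcal{P}]$ over $\mathfrak{m}$ and getting $k=B/\mathfrak{m}\hookrightarrow k[\mathcal{P}]/\mathfrak{n}=k$ by the Nullstellensatz for $k[\mathcal{P}]$; the only case left open is $\mathfrak{m}\,k[\mathcal{P}]=k[\mathcal{P}]$, and that is exactly where the finiteness input is unavoidable.

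The main obstacle is therefore step (3): controlling the residue field $B/\mathfrak{m}$ of the corner ring $e_iAe_i$. Steps (1) and (2) are purely formal; the content lies in knowing that $e_iAe_i$ is a finitely generated $k$-algebra (equivalently, that the cycle monomials at a fixed vertex are generated, as a monoid, by finitely many of them), which is a structural feature of ghor algebras rather than something forced by abstract nonsense.
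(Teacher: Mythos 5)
Your reduction is exactly the paper's: pass to the corner ring $B:=e_iAe_i\cong\bar{\tau}(e_iAe_i)\subseteq k[\mathcal{S}]$, observe that $e_iV$ is zero or a simple $B$-module, and conclude from commutativity of $B$ and $k=\overbar{k}$. Steps (1) and (2) are fine, and you are right that the only real content is the residue-field claim in step (3). The gap is the specific finiteness input you lean on there: the assertion that $B$ is a finitely generated $k$-algebra, ``by the same circle of facts that yields finite generation of the cycle algebra $S$,'' is not justified and should not be expected to hold. The finite generation of $S$ comes from factoring an arbitrary cycle into cycles with no cyclic subpaths, but those factors are based at \emph{other} vertices, so the argument does not stay inside one corner: a cycle at $i$ threading $n$ times through a cycle $c$ based at $j\neq i$ has monomial $\overbar{d}\,\overbar{c}^{\,n}$, and none of these need factor within the monoid of cycle monomials at $i$. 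Vertex-based subalgebras of $S$ are precisely the rings that fail to be noetherian (hence fail to be finitely generated) in this theory --- the center $R=k[\cap_i\bar{\tau}(e_iAe_i)]$ is nonnoetherian for $g\geq 2$ --- so finite generation of $e_iAe_i$ is a substantive claim, likely false in general, and cannot be invoked as a known structural fact.

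The good news is that the gap closes without any finite generation, so your ``unavoidable'' finiteness is in fact avoidable. Two routes: (a) In the paper's application $V$ is a composition factor of a finite-dimensional module, so $e_iV$ is finite dimensional; then the image of $B$ in $\operatorname{End}_k(e_iV)$ is a finite-dimensional commutative algebra acting irreducibly, hence a finite field extension of $k$, hence $k$, giving $\dim_k e_iV=1$. (b) For arbitrary simple modules, use the monomial structure instead of finite generation: $B=k[M]$ for the submonoid $M\subseteq\mathbb{N}^{\mathcal{S}}$ of cycle monomials at $i$. Given $\mathfrak{m}\in\Max B$, the field $F=B/\mathfrak{m}$ is generated over $k$ by the images of the elements of $M$; those with nonzero image form a cancellative submonoid $M_1$, and the multiplicative map $M_1\to F^{\times}$ extends to the subgroup of $\mathbb{Z}^{\mathcal{S}}$ generated by $M_1$, so its image is a finitely generated subgroup $H\leq F^{\times}$ with $F=k[H]$. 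Thus $F$ is a finitely generated $k$-algebra which is a field, so $F=k$ by Zariski's lemma, and $\dim_k e_iV\leq 1$ as desired. In short: same route as the paper, correct identification of the one subtle point, but the justification you propose for it would not stand; replace it by (a) or (b).
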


\begin{proof}
For each $i \in Q_0$, the subspace $e_iV$ of $V$ is a simple $e_iAe_i$-module.
The lemma then follows since $e_iAe_i$ is a commutative $k$-algebra and $k$ is an algebraically closed field. 
\end{proof}

\begin{Example}\label{ex:simple-modules}
Let $A$ be the ghor algebra on a pinched torus $\Sigma$ given in Figure~\ref{fig:simple-modules}. 
The figure shows the support of four families of isomorphism classes of simple $A$-modules, each parameterized by a subvariety of the algebraic variety $\operatorname{Max}S$.  
%types of simple modules over a ghor algebra on a pinched torus $\Sigma$.
In each example, an arrow is represented by a nonzero scalar if and only if it is highlighted in blue.
In the first example, there is a one-dimensional vector space at every vertex. 
In the second and third examples, a vertex supports a nonzero vector space if and only if it is incident with a blue arrow.
In the last example, the only nonzero vector space is at the blue vertex. 
We will call such a one-dimensional module a \textit{vertex simple module}.
\end{Example}

\begin{figure}
\centering
\includegraphics[width=1.0\linewidth]{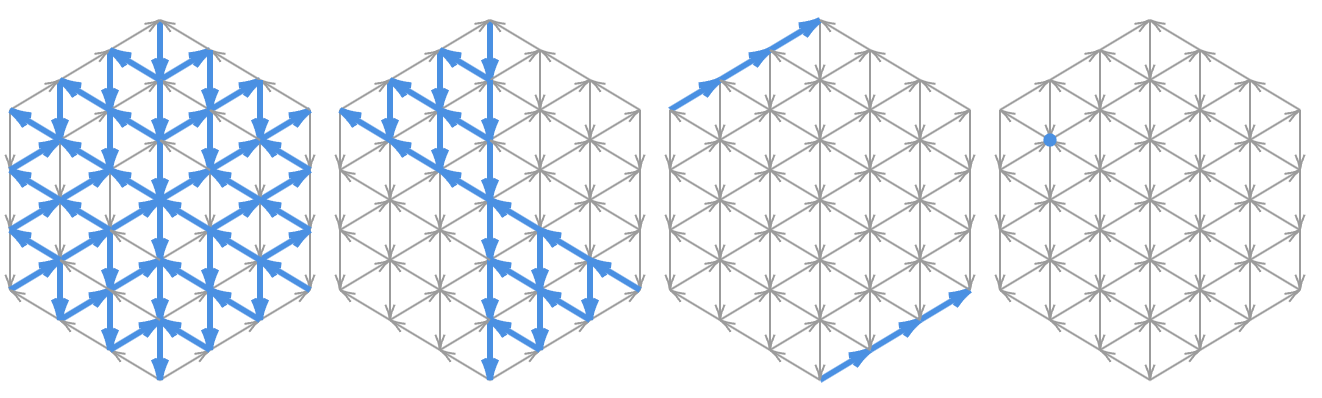}
%\vspace*{-1.0cm}
\caption{The support of some simple modules, drawn in blue, over a ghor algebra on a pinched torus.}
\label{fig:simple-modules}
\end{figure}

\section{Cyclic localizations} \label{central localizations}

Let $A$ be a $k$-algebra with prime center $R$.
The central localization of $A$ at a prime ideal $\mathfrak{p} \in \operatorname{Spec}R$ is the tensor product 
\begin{equation} \label{central localization}
A_{\mathfrak{p}} := A \otimes_R R_{\mathfrak{p}}.
\end{equation}
The algebras we consider here are (isomorphic to) matrix rings $A = [A_{ij}]_{i,j} \subseteq M_n(B)$, where $B$ is an integral domain $k$-algebra that contains $R$.\footnote{Here we are taking $R$ to be a subalgebra of $B$, and the center of $A$ to be $R \, 1_A = R \, \bm{1}_{n}$, which we simply identify with $R$ itself.}
Note that each $A_{ij} \subseteq B$ is a $k$-vector space, and each $A_i := A_{ii}$ is a $k$-algebra.
The central localization (\ref{central localization}) may be written
\begin{equation*}
A_{\mathfrak{p}} = \left\langle [A_{ij}R_{\mathfrak{p}}]_{i,j} \right\rangle \subseteq M_n(\operatorname{Frac}B).
\end{equation*}

In the nonnoetherian setting, so-called cyclic localizations  are more amiable than central localizations. 
For example, cyclic localizations of ghor algebras remain path algebras modulo relations, in contrast to their central localizations; this is shown in Lemma \ref{lm:simple non max dim} below.
Cyclic localizations were introduced in \cite[Definition 3.1]{B18}, and are used there and in \cite{B21b} to study homological properties of certain nonnoetherian matrix rings.

\begin{Definition}
Let $B$ be an integral domain $k$-algebra, and let $A = [A_{ij}]_{i,j} \subseteq M_n(B)$ be a matrix ring.
The \textit{cycle algebra} of $A$ is the union $S = k[\cup_i A_i]$, and the \textit{cyclic localization} of $A$ at a prime $\mathfrak{q} \in \operatorname{Spec}S$ is the algebra
\begin{equation*}
A_{\mathfrak{q}} = \left\langle [A_{ij}(A_j)_{\mathfrak{q} \cap A_j} ]_{i,j} \right\rangle \subseteq M_n(\operatorname{Frac}B).
\end{equation*}
\end{Definition}

\begin{Remark} 
Observe that the cyclic and central localizations coincide whenever the center $R$ is given by the intersection $R = k[\cap_i A_i]$, and $R = S$.
In particular, if $R = S$, then $A_i = A_j$ for each $i,j \in [1,n]$, and so $A_i = R$ for each $i \in [1,n]$. 
Furthermore, under mild assumptions, $R = S$ if and only if $R$ is noetherian, if and only if $A$ is a finitely generated $R$-module \cite[Theorem 4.1.2]{B16}. 
\end{Remark}

Now let $A$ be a ghor algebra with cycle algebra $S$, and fix a prime ideal $\mathfrak{q} \in \operatorname{Spec}S$. 

\begin{Definition}
We call a path $p$ \textit{locally invertible} in $A_{\mathfrak{q}}$ if there is an element $q \in e_{\operatorname{t}(p)}A_{\mathfrak{q}} e_{\operatorname{h}(p)}$ such that
\begin{equation*}
qp = e_{\operatorname{t}(p)} \ \ \ \ \text{ and } \ \ \ \ pq = e_{\operatorname{h}(p)}.
\end{equation*}
\end{Definition}

\begin{Lemma} \label{lm:simple non max dim} \
Every subpath of a locally invertible cycle in $A_{\mathfrak{q}}$ is locally invertible.
%In particular, each arrow $a \in Q_1$ that satisfying \kb{(-1) where do we define $\tilde{a}$? this seems to be the first time we use tilde but I don't see its definition} 
%$\tilde{a} \not = 0$ is locally invertible in $A_{\mathfrak{q}}$.
\end{Lemma}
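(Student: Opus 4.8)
The plan is to translate the problem into a statement about the monomial matrix entries $\overline{(\cdot)}$ and to exploit that these lie in the commutative ring $k[\mathcal{P}]$, so that the label of a cycle is unchanged under cyclic rotation. First I would record the dictionary for a cyclic localization of a ghor algebra: for a cycle $c$ based at a vertex $i$ one has $e_i A_{\mathfrak q} e_i = A_i (A_i)_{\mathfrak q \cap A_i} = (A_i)_{\mathfrak q\cap A_i}$, a commutative localization, and for a path $p$ with $s := \operatorname{t}(p)$, $t := \operatorname{h}(p)$ one has $e_s A_{\mathfrak q} e_t = A_{st}\,(A_t)_{\mathfrak q\cap A_t}$, where $A_{st}$ is the $k$-span of monomials of paths from $t$ to $s$. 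From this I would extract two elementary facts. \emph{(1)} If $c$ is a locally invertible cycle at $i$ with local inverse $d$, then $\overline c \notin \mathfrak q$: writing $\overline d = f/h$ with $f,h\in A_i$ and $h\notin\mathfrak q$, the relation $\overline c\,\overline d = 1$ gives $\overline c f = h \notin \mathfrak q$, so the ideal $\mathfrak q$ cannot contain $\overline c$. \emph{(2)} If $p$ is a path from $s$ to $t$ and $r$ is \emph{any} path from $t$ to $s$ with $\overline{pr}\notin\mathfrak q$, then $p$ is locally invertible: the element of $e_s A_{\mathfrak q}e_t$ whose single matrix entry is $\overline r/\overline{pr}$ is well defined, since $\overline r\in A_{st}$ and $1/\overline{pr}\in (A_t)_{\mathfrak q\cap A_t}$, and it is a two-sided local inverse of $p$ because $\overline r\,\overline p/\overline{pr} = 1$ in the two relevant diagonal slots.

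With these in hand the lemma is immediate. Given a locally invertible cycle $c$ at $i$ and a subpath $p$, write $c = q_2 p q_1$ with $q_1$ a (possibly trivial) path from $i$ to $\operatorname{t}(p)$ and $q_2$ a (possibly trivial) path from $\operatorname{h}(p)$ to $i$, and set $r := q_1 q_2$, a path from $\operatorname{h}(p)$ to $\operatorname{t}(p)$; then $pr$ is a cycle based at $\operatorname{h}(p)$, in fact a cyclic rotation of $c$. Since $\overline p, \overline{q_1}, \overline{q_2}$ are monomials in the commutative ring $k[\mathcal{P}]$, we have $\overline{pr} = \overline p\,\overline{q_1}\,\overline{q_2} = \overline{q_2}\,\overline p\,\overline{q_1} = \overline c$, which by fact (1) is not in $\mathfrak q$. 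Fact (2) now shows that $p$ is locally invertible in $A_{\mathfrak q}$, and since $p$ was an arbitrary subpath of an arbitrary locally invertible cycle, we are done.

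I expect the only delicate point to be the bookkeeping in fact (2): ensuring the candidate inverse $\overline r/\overline{pr}$ actually lands in the block $e_s A_{\mathfrak q}e_t = A_{st}(A_t)_{\mathfrak q\cap A_t}$. This is precisely why $p$ should be completed to the cycle $pr$ based at $\operatorname{h}(p)$ (equivalently $rp$ based at $\operatorname{t}(p)$) rather than to some other cycle, and why the complementary arc $r = q_1 q_2$ of $c$ is the natural witness. The conceptual content — that passing to monomial labels kills the noncommutativity, so a cyclic rotation of a locally invertible cycle stays locally invertible and each of its arcs inherits a local inverse — is what makes the argument short; there is no hard combinatorial or homological input.
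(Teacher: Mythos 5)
Your proof is correct and is essentially the paper's argument: the inverse of a subpath $p$ of a locally invertible cycle $c$ is the complementary arc tensored with $\overbar{c}^{-1}$, with commutativity of the monomial labels in $k[\mathcal{P}]$ handling the cyclic rotation. You merely make explicit two points the paper leaves implicit (that $\overbar{c}\notin\mathfrak{q}$, and that the candidate inverse lies in the correct block $e_{\operatorname{t}(p)}A_{\mathfrak{q}}e_{\operatorname{h}(p)}$), which is fine.
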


\begin{proof}
Let $p$ be a locally invertible cycle and let $a$ be a subpath of $p$.
By possibly cyclically permuting $p$, it suffices to suppose that $p$ factors into paths $p = aq$.
Set 
\begin{equation*}
a^{-1} := q \otimes \overbar{p}^{-1}.
\end{equation*}
Then $a^{-1}$ is a path from $e_{\operatorname{t}(q)} = e_{\operatorname{h}(a)}$ to $e_{\operatorname{h}(q)} = e_{\operatorname{t}(a)}$, and $\overbar{a^{-1}} = \overbar{q} \otimes \overbar{p}^{-1} = \overbar{a}^{-1}$.
\end{proof}

We call an element of a localization $A_{\mathfrak{q}}$ a \textit{path} if it is a concatenation of arrows and inverse arrows in $A_{\mathfrak{q}}$.
If $a \in Q_1$ is an arrow and $c \in Q_{\geq 1}$ is path for which $ac$ is a unit cycle, then we call $c$ the \textit{complementary arc} to $a$.

\begin{Lemma}
Let $p,q$ be paths in a cyclic localization $A_{\mathfrak{q}}$.
If
\begin{equation} \label{coincident}
\operatorname{t}(p^+) = \operatorname{t}(q^+) \ \ \ \ \text{ and } \ \ \ \ \operatorname{h}(p^+) = \operatorname{h}(q^+),
\end{equation}
then $\overbar{p} = \overbar{q} \sigma^n$ for some $n \in \mathbb{Z}$.
\end{Lemma}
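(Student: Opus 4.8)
The plan is to reduce the statement to a combinatorial homotopy argument on $\Sigma$, and then to bookkeep exponents of $\sigma$.

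First I would reformulate the hypothesis (\ref{coincident}). The covering quiver $Q^{+}$ sits inside the universal cover $\widetilde{\Sigma}$ of $\Sigma$, which is simply connected; hence two edge-walks in $Q^{+}$ with the same tail and the same head are homotopic rel endpoints in $\widetilde{\Sigma}$, and projecting down shows that $p$ and $q$ (viewed as edge-walks in $Q$, possibly traversing arrows backwards) have coincident endpoints in $Q$ and are homotopic rel endpoints in $\Sigma$. Conversely, lifting a path-homotopy shows that (\ref{coincident}) holds as soon as $p \simeq q$ rel endpoints. So the content to be proved is: if $p \simeq q$ rel endpoints in $\Sigma$, then $\overbar{p} = \overbar{q}\,\sigma^{n}$ for some $n \in \mathbb{Z}$. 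Here I use that, by Lemma \ref{lm:simple non max dim}, any arrow occurring inverted in a path of $A_{\mathfrak q}$ is locally invertible, so $\overbar{a^{-1}} = \overbar{a}^{-1}$ and $\overbar{p}$ is the Laurent monomial obtained by multiplying the $\overbar{a}^{\pm 1}$ over the arrows $a$ traversed by the walk, with signs according to orientation.

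Next I would invoke the CW structure on $\Sigma$ whose $1$-skeleton is $Q$ and whose $2$-cells are the connected components of $\Sigma \setminus Q$, each an open disk attached along a unit cycle --- this is precisely the defining condition of a dimer quiver, and remains valid in the pinched case. For such a $2$-complex, the fundamental groupoid is the free groupoid on the edges of $Q$ modulo the relations given by the boundary unit cycles; concretely, two edge-walks with fixed endpoints are homotopic rel endpoints if and only if one is obtained from the other by finitely many elementary moves: (i) inserting or deleting a spur $a a^{-1}$ or $a^{-1} a$; and (ii) inserting or deleting a cyclic rotation of a unit cycle, or its inverse, at one of its vertices. (Move (ii) in particular subsumes the move of replacing a forward sub-arc of a face boundary by the complementary arc traversed backwards.) Then I would track labels under these moves: a move of type (i) multiplies $\overbar{(-)}$ by $\overbar{a}\,\overbar{a}^{-1} = 1$, and a move of type (ii) multiplies $\overbar{(-)}$ by $\sigma^{\pm 1}$, since every unit cycle, and every cyclic rotation of one, has label $\sigma$ by Notation \ref{not:sigma}, while inserting its inverse contributes $\sigma^{-1}$. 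Composing the finitely many moves that carry $q$ to $p$ therefore yields $\overbar{p} = \overbar{q}\,\sigma^{n}$, with $n$ the signed count of type-(ii) insertions, as desired.

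The main obstacle is the second step: one must justify carefully that the combinatorial description of homotopy classes is legitimate in the present generality --- a quiver, i.e.\ an oriented graph rather than a bare $1$-complex, possibly on a surface pinched at a vertex --- and that a homotopy through $\Sigma$ between walks that meander along parts of several face boundaries really does decompose into the stated local moves; this is the content of cellular approximation / transversality for maps of a square into a $2$-complex and should be supported by an explicit reference. Everything after that is bookkeeping of powers of $\sigma$. (When $p$ and $q$ happen to be genuine forward cycles and $A$ is geodesic, the conclusion is immediate from (\ref{p = q sigma}), since $p \simeq q$ forces $[p] = [q]$ in $H_1(\Sigma)$; the argument above is what is needed for general paths.)
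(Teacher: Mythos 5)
Your argument is correct in substance, but it takes a genuinely different route from the paper. The paper's proof is a two-line reduction: for a locally invertible arrow $a$ with complementary arc $c$, one has $a^{-1}\sigma_{\operatorname{t}(c)} = c$, so if the path $q$ in $A_{\mathfrak{q}}$ contains $\ell$ inverse arrows, then $q\sigma^{\ell}_{\operatorname{t}(q)}$ is an honest path in $A$ with the same lifted endpoints as $p$; the conclusion $\overbar{p} = \overbar{q}\sigma^{\ell+n}$ then follows by quoting (\ref{p = q sigma}), i.e.\ the already-available result of \cite[Theorem 3.11]{BB21a} for the (geodesic) ghor algebra. You instead re-prove the needed homotopy-invariance of labels from scratch: you translate the coincidence of lifted endpoints into homotopy rel endpoints in $\Sigma$, describe homotopy of edge-walks combinatorially via spur moves and insertions/deletions of unit-cycle boundaries in the CW structure whose $2$-cells are the faces of $Q$, and track powers of $\sigma$ move by move. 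What your route buys is self-containedness and generality: it uses only that every unit cycle has label $\sigma$, so it does not invoke the geodesic hypothesis or the cited theorem, and it handles localized paths with inverse arrows directly rather than by trading each $a^{-1}$ for its complementary arc. What it costs is exactly the step you flag: the identification of the fundamental groupoid of the $2$-complex with the edge-path groupoid modulo face boundaries (a standard van Kampen/edge-path fact, valid also for the pinched surface since it is still a CW complex with $Q$ as $1$-skeleton), which needs a precise reference, whereas the paper's reduction avoids any such topological machinery by outsourcing it to \cite{BB21a}. Both arguments are sound; yours is longer but more elementary in its dependencies.
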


\begin{proof}
Fix a  locally invertible arrow $a \in Q_1$, and denote by $c$ its complementary arc. 
Then $a^{-1}\sigma_{\operatorname{t}(c)} = c$.
Thus, if $q$ is a path in $A_{\mathfrak{q}}$ consisting of precisely $\ell \geq 0$ inverse arrows $a^{-1}$, then $q\sigma^{\ell}_{\operatorname{t}(q)}$ is a path in $A$.
Furthermore, if $p$ is a path in $A$ for which (\ref{coincident}) holds, then
\begin{equation*}
\operatorname{t}((q\sigma^{\ell}_{\operatorname{t}(q)})^+) = \operatorname{t}(q^+) = \operatorname{t}(p^+) \ \ \ \ \text{ and } \ \ \ \ \operatorname{h}((q\sigma^{\ell}_{\operatorname{t}(q)})^+) = \operatorname{h}(q^+) = \operatorname{h}(p^+).
\end{equation*}
Therefore there is an $n \in \mathbb{Z}$ such that $\overbar{q}\sigma^{\ell + n} = \overbar{p}$, by (\ref{p = q sigma}).
\end{proof}

\section{The first syzygy}\label{sec:syzygy}

Throughout, let $A$ be a geodesic ghor algebra and $V$ a simple left $A$-module. 
By \textit{cycle} we mean a nontrivial cycle, and by \textit{cyclic subpath} we mean a proper subpath that is a cycle.

For each path $p \in Q_{\geq 0}$, denote by $\tilde{p} \in k$ the scalar representing $p$ with $V$ viewed as a representation of $Q$, i.e. $\tilde{p}$ is the scalar by which $p$ acts.
Set 
\begin{equation*}
\mathfrak{n} := \operatorname{ann}_S V := \{g \in S \ | \text{ if $s \in e_iAe_i$ satisfies $\bar{\tau}(s) = g$, then $sV = 0$} \}.
\end{equation*}

\begin{Lemma}
The annihilator $\mathfrak{n}$ is a maximal ideal of $S$.
\end{Lemma}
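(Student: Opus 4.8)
The plan is to show that $\mathfrak{n} = \operatorname{ann}_S V$ is the kernel of an algebra homomorphism $S \to k$, hence a maximal ideal. Since $V$ is simple, Lemma \ref{lm:simple} gives $\dim_k e_iV \leq 1$ for each $i \in Q_0$, and since $V \neq 0$ there is at least one vertex $i_0$ with $e_{i_0}V \neq 0$. First I would fix such an $i_0$ and observe that $e_{i_0}V$ is a one-dimensional module over the commutative algebra $e_{i_0}Ae_{i_0}$, so the action of $e_{i_0}Ae_{i_0}$ on $e_{i_0}V$ is given by an algebra homomorphism $\chi_{i_0}\colon e_{i_0}Ae_{i_0} \to k$, $s \mapsto \tilde{s}$. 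Applying $\bar\tau$ identifies $e_{i_0}Ae_{i_0}$ with $\bar\tau(e_{i_0}Ae_{i_0}) \subseteq S$, and $S$ is generated as a $k$-algebra by $\bigcup_{i} \bar\tau(e_iAe_i)$. So the crux is to produce a single algebra homomorphism $\psi\colon S \to k$ whose restriction to each $\bar\tau(e_iAe_i)$ records the action of $e_iAe_i$ on $V$, and then check $\mathfrak{n} = \ker\psi$.

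The key step is consistency across vertices: I need that whenever $s \in e_iAe_i$ and $t \in e_jAe_j$ have $\bar\tau(s) = \bar\tau(t)$ as elements of $S$, then $\tilde s = \tilde t$. This is where I would use that $A$ is geodesic together with (\ref{p = q sigma}). Concretely, pick vertices $i,j$ both supporting nonzero vector spaces (if $e_jV = 0$ then every cycle at $j$ acts as zero on $V$, and I would handle that degenerate case separately, showing such generators lie in $\mathfrak{n}$ and are consistent with the homomorphism sending them to $0$). Choose a path $r$ from $i$ to $j$ and a path $r'$ from $j$ to $i$ with $\tilde r \tilde{r'} \neq 0$ — such paths exist because the relevant simple matchings give paths between any two vertices avoiding a chosen matching, so the composite cycle $r'r$ at $i$ has $\bar\tau(r'r)$ a unit (up to $\sigma$) after localizing, hence acts invertibly; in any case $\widetilde{r'r} \neq 0$ forces $\tilde r, \tilde{r'} \neq 0$. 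Then for $s \in e_iAe_i$, the cycle $r s r' \in e_jAe_j$ has $\widetilde{rsr'} = \tilde r\, \tilde s\, \tilde{r'}$ and $\bar\tau(rsr') = \bar\tau(r)\bar\tau(s)\bar\tau(r')$. Comparing with $\sigma$-multiples via (\ref{p = q sigma}) and using that $\bar\tau(r'r) \mid \sigma$-power relations, I can transport the character at $i$ to the character at $j$ and see they agree on the overlap $\bar\tau(e_iAe_i) \cap \bar\tau(e_jAe_j)$, up to the scalar $\widetilde{\sigma}$; one also checks $\widetilde{\sigma} \neq 0$, since $\sigma_i$ acts invertibly on the simple module $V$ (it acts as a scalar by Schur, and it cannot act as $0$ as it would annihilate all of $V$, but $\sigma_i$ factors a locally nonzero cycle). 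With $\widetilde{\sigma} \neq 0$ fixed, the partial characters glue to a well-defined $\psi\colon S \to k$.

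Once $\psi$ is constructed, the identification $\mathfrak{n} = \ker\psi$ is immediate from the definitions: $g \in \mathfrak{n}$ means every $s \in e_iAe_i$ with $\bar\tau(s) = g$ satisfies $\tilde s = 0$, i.e. $\psi(g) = 0$; conversely $\psi(g) = 0$ forces all such $s$ to act as zero. Hence $\mathfrak{n} = \ker\psi$ is the kernel of a surjective $k$-algebra map $S \to k$ (surjective because $\psi(1) = 1$), so $S/\mathfrak{n} \cong k$ and $\mathfrak{n} \in \operatorname{Max} S$. I expect the main obstacle to be the consistency-across-vertices step: making rigorous that the scalars $\tilde{(-)}$ are compatible under $\bar\tau$ on overlaps of the $e_iAe_i$, which genuinely needs the geodesic hypothesis (so that $\bar\tau = \bar\eta$ and (\ref{p = q sigma}) applies) and the simple-matching path-connectivity to move between vertices while keeping scalars nonzero. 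The bookkeeping with $\sigma$-powers and the non-vanishing of $\widetilde{\sigma}$ is the part that requires care; everything after that is formal.
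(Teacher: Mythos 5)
Your overall strategy (reduce to consistency of the scalars $\tilde{s}$ across vertices, then conclude maximality via a character/Nullstellensatz) is the same in spirit as the paper's, but your execution of the consistency step has a genuine gap. You route it through (\ref{p = q sigma}) and a correction by powers of $\sigma$, and you then need $\widetilde{\sigma} \neq 0$. That claim is false for general simple modules, and the lemma is stated for an arbitrary simple $V$: for a vertex simple module (the last case of Example \ref{ex:simple-modules}) all arrows act by zero, so every unit cycle acts by zero and $\widetilde{\sigma}_i = 0$; the same happens for the other non-maximal-support simples in Figure \ref{fig:simple-modules}. Your justification is also invalid: $\sigma_i$ is not central in $A$ (only $\sum_j \sigma_j$ is), and even a central element may act by zero on a simple module --- Schur's lemma gives ``scalar,'' not ``nonzero scalar.'' So as written the gluing of the partial characters breaks precisely when $\underline{\dim}\,V \neq 1^{Q_0}$, i.e.\ in most of the cases the lemma must cover.

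The gap is repairable, and the repair shows the $\sigma$-bookkeeping is unnecessary. If $s_i \in e_iAe_i$ and $s_j \in e_jAe_j$ satisfy $\bar{\tau}(s_i) = \bar{\tau}(s_j) = g$ with $\tilde{e}_i \neq 0 \neq \tilde{e}_j$, choose (by simplicity of $V$) a path $r \in e_jAe_i$ with $\tilde{r} \neq 0$; then $rs_i$ and $s_jr$ have the same tail, the same head, and the same $\bar{\tau}$-image, hence are equal in $A = kQ/\ker\eta$ (using $\ker\eta = \ker\tau$ for geodesic $A$), so $\tilde{r}\tilde{s}_i = \tilde{s}_j\tilde{r}$ and $\tilde{s}_i = \tilde{s}_j$. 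No homology comparison, no $\sigma$, and no local invertibility is needed; this intertwining argument is what the paper's brief ``by Schur's lemma'' is compressing. Note also that the paper sidesteps a second point your proposal leaves open: gluing the vertexwise characters into one algebra homomorphism $\psi: S \to k$ requires checking that all polynomial relations among generators coming from \emph{different} vertices are respected. The paper avoids constructing $\psi$ altogether by showing only that $g - c \in \mathfrak{n}$ for each monomial generator $g$ of the finitely generated algebra $S$ (with $c = 0$ when no supported vertex carries a cycle with image $g$) and then invoking the Nullstellensatz, which yields maximality directly.
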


\begin{proof}
The cycle algebra $S$ is finitely generated over $k$ since there are only a finite number of cycles in $Q$ without cyclic subpaths. 
Thus, $S$ has a finite generating set consisting of monomials; let $g$ be a monomial in this generating set. 
Let $i,j \in Q_0$ be vertices for which there are cycles $s_i \in e_iAe_i$, $s_j \in e_jAe_j$ such that $\bar{\tau}(s_i) = \bar{\tau}(s_j) = g$.

Suppose $\tilde{e}_i \not = 0$ and set $c := \tilde{s}_i \in k$.
Then $s_i - ce_i$ annihilates $V$, and 
\begin{equation*}
\bar{\tau}(s_i - ce_i) = g - c.
\end{equation*}
If $\tilde{e}_j \not = 0$ as well, then $\tilde{s}_j = c$ since $V$ is simple, by Schur's lemma.
If instead $\tilde{e}_j = 0$, then both $s_j$ and $e_j$ annihilate $V$.
In either case, $s_j - ce_j$ also annihilates $V$, and 
\begin{equation*}
\bar{\tau}(s_j - ce_j) = g -c = \bar{\tau}(s_i - ce_i).
\end{equation*}
Therefore, $g-c$ is in $\mathfrak{n}$. 
But the field $k$ is algebraically closed, and so $\mathfrak{n}$ is a maximal ideal of $S$ by Hilbert's Nullstellensatz. 
\end{proof}

Recall the noetherian locus $U_{S/R}$ defined in (\ref{noetherian locus}).

\begin{Proposition} \label{max dim prop}
Let $V$ be a simple $A$-module of dimension vector $\underline{\dim} V = 1^{Q_0}$.
Set $\mathfrak{n} := \operatorname{ann}_S V \in \operatorname{Max}S$ and $\mathfrak{m} := \mathfrak{n} \cap R \in \operatorname{Max}R$. 
Then $R_{\mathfrak{m}} = S_{\mathfrak{n}}$, and the corresponding central and cyclic localizations coincide, %\kb{(4) i.e.?} 
$A_{\mathfrak{m}} = A_{\mathfrak{n}}$. 
In particular, $\mathfrak{n}$ is in the noetherian locus $U_{S/R} \subset \operatorname{Max}S$ of $R$.
\end{Proposition}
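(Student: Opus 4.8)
The plan is to reduce the proposition to the single equality $R_{\mathfrak m} = S_{\mathfrak n}$; the remaining assertions then follow formally. First I would pin down $\mathfrak n$ and $\mathfrak m$. Since $\underline{\dim} V = 1^{Q_0}$, choose bases so that each vertex idempotent $e_i$ acts on $V$ as the scalar $1$ and each path $p$ of $Q$ acts by a scalar $\tilde{p} \in k$. By the argument already used to show that $\mathfrak n$ is a maximal ideal (Schur's lemma, together with the fact that $\overbar{p} = \overbar{q}$ in $k[\mathcal{S}]$ forces $p = q$ in the geodesic ghor algebra $A$ whenever $p$ and $q$ have coincident tails and coincident heads), the scalar $\tilde{p}$ depends only on the monomial $\overbar{p}$; hence $\overbar{s} \mapsto \tilde{s}$ is a well-defined $k$-algebra homomorphism $\phi \colon S \to k$ with $\ker \phi = \mathfrak n$. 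Restricting to $R \subseteq S$ and noting that $\phi$ is surjective (it sends $\overbar{e_i} = 1$ to $1$), we get $\mathfrak m = \mathfrak n \cap R = \ker(\phi|_R)$, so $R/\mathfrak m \cong k$ and $\mathfrak m \in \operatorname{Max} R$.

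Next comes the reduction. Write $A_i := \overbar{e_i A e_i} \subseteq k[\mathcal{S}]$, so that $S = k[\bigcup_i A_i]$ and $R = \bigcap_i A_i$ (an intersection of $k$-subalgebras of $k[\mathcal{S}]$, hence a $k$-algebra). From $R \subseteq A_i \subseteq S$ and $\mathfrak n \cap R = \mathfrak m$ we obtain $R_{\mathfrak m} \subseteq (A_i)_{\mathfrak n \cap A_i} \subseteq S_{\mathfrak n}$ for every $i$, and $S_{\mathfrak n}$ is local with maximal ideal $\mathfrak n S_{\mathfrak n}$. Since $S$ is generated over $k$ by the monomials $\overbar{c}$ for cycles $c$ of $Q$ having no cyclic subpath modulo $\ker \eta$, it suffices to prove the divisibility statement $(\ast)$: for each such $c$ there is $f \in R$ with $\phi(f) \neq 0$ and $\overbar{c}\cdot f \in R$. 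Indeed, $(\ast)$ gives $\overbar{c} \in R_{\mathfrak m}$ for every generator, hence $S \subseteq R_{\mathfrak m}$; and then any $g \in S \setminus \mathfrak n$ is a unit of $R_{\mathfrak m}$, since writing $g = r/s$ with $r, s \in R \setminus \mathfrak m$ gives $\phi(r) = \phi(g)\phi(s) \neq 0$, so $r \notin \mathfrak m$. Hence $S_{\mathfrak n} \subseteq R_{\mathfrak m}$, and $R_{\mathfrak m} = S_{\mathfrak n}$.

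The construction of $f$ — establishing $(\ast)$ — is the heart of the matter and the step I expect to be the main obstacle. Fix $c$ at a vertex $i_0$. Since each component of $\Sigma \setminus Q$ is a disk, every class of $H_1(\Sigma)$ is represented by a cycle of $Q$ based at any prescribed vertex; pick for each $i$ a cycle $d_i$ at $i$ with $[d_i] = [c]$. By (\ref{p = q sigma}), $\overbar{c} = \overbar{d_i}\,\sigma^{\ell_i}$ for some $\ell_i \in \mathbb{Z}$, so $\overbar{c}\cdot\sigma^{N} \in A_i$ whenever $N \geq \max(0, -\ell_i)$, and with $N := \max_i \max(0, -\ell_i)$ we get $\overbar{c}\cdot\sigma^{N} \in \bigcap_i A_i = R$. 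If $\phi(\sigma) \neq 0$ we may take $f = \sigma^{N}$, and $(\ast)$ holds. The delicate case is $\phi(\sigma) = 0$. Here one uses that $\underline{\dim} V = 1^{Q_0}$ forces the arrows annihilated by $V$ to form a perfect matching $x_0$ — each unit cycle contains at least one such arrow since $\phi(\overbar{\sigma_i}) = 0$, and exactly one by the structure theory of simple modules of maximal dimension vector — which is moreover simple, since $Q \setminus x_0$ carries the simple representation $V$ of dimension vector $1^{Q_0}$ and is therefore strongly connected. Because deleting a perfect matching merges the two faces adjacent to each deleted arrow into a disk, $Q \setminus x_0$ still fills $\Sigma$. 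The idea is then to choose the auxiliary cycles with minimal $x_0$-multiplicity at all vertices simultaneously: for $c$ disjoint from $x_0$ (equivalently $\overbar{c} \notin \mathfrak n$) one takes each $d_i$ inside $Q \setminus x_0$, forcing $\ell_i = 0$ for all $i$ and hence $\overbar{c} \in R$ outright; for $c$ meeting $x_0$ one first multiplies $\overbar{c}$ by the label of a cycle disjoint from $x_0$ to clear its $x_0$-part, reducing to the previous case. This is exactly where the geodesic hypothesis is indispensable — it supplies, for each homology class, enough "straight" representatives through the various vertices — and controlling the interaction of these representatives with the matching $x_0$ is the principal technical difficulty.

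Finally, with $R_{\mathfrak m} = S_{\mathfrak n}$ in hand: $\mathfrak n \in U_{S/R}$ is immediate from the description (\ref{noetherian locus}) of the noetherian locus. Moreover each $(A_i)_{\mathfrak n \cap A_i}$ is squeezed between $R_{\mathfrak m}$ and $S_{\mathfrak n}$, hence equals $S_{\mathfrak n} = R_{\mathfrak m}$; therefore the cyclic localization $A_{\mathfrak n} = \langle [A_{ij}(A_j)_{\mathfrak n \cap A_j}] \rangle$ coincides with $\langle [A_{ij} R_{\mathfrak m}] \rangle = A \otimes_R R_{\mathfrak m}$, which by (\ref{central localization}) is exactly the central localization $A_{\mathfrak m}$.
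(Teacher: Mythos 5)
Your overall skeleton (reduce everything to $R_{\mathfrak m}=S_{\mathfrak n}$ via the character $\phi:S\to k$, then recover $A_{\mathfrak m}=A_{\mathfrak n}$ and $\mathfrak n\in U_{S/R}$ formally) is sound and matches the paper's endgame. The gap is in the heart of the argument, the divisibility statement $(\ast)$: your proof is complete only when $\phi(\sigma)\neq 0$, and in the case $\phi(\sigma)=0$ it rests on the claim that the arrows annihilated by $V$ form a single (simple) perfect matching, i.e.\ that each unit cycle contains \emph{exactly one} zero arrow. That claim is not justified and is in fact false in general. Already for the conifold quiver on the torus (two vertices, arrows $a_1,a_2:1\to 2$, $b_1,b_2:2\to 1$, both unit cycles of length four containing all four arrows), the representation with $\tilde a_1=\tilde b_1=1$, $\tilde a_2=\tilde b_2=0$ is simple of dimension vector $1^{Q_0}$, has $\tilde\sigma=0$, and its zero set $\{a_2,b_2\}$ meets each unit cycle twice -- it is a union of two perfect matchings, not one. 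Since the subsequent choice of the cycles $d_i$ ``inside $Q\setminus x_0$'' and the clearing step both lean on this structure, and since you yourself flag that controlling the interaction with $x_0$ is an unresolved technical difficulty, the delicate case is genuinely open in your write-up; the case split on $\phi(\sigma)$ is exactly where the argument breaks.

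The paper avoids this entirely with a much simpler device that you did not exploit: because $V$ is simple with $\underline{\dim}V=1^{Q_0}$, for every ordered pair of vertices there is a path acting by a nonzero scalar, so one can concatenate such paths into a single cycle $t\in e_iAe_i$ passing through \emph{every} vertex with $\tilde t\neq 0$. Any cycle through all vertices has its $\bar\tau$-label in $R$ (its cyclic permutations realize the same monomial at every vertex), so $\overbar t\in R\setminus\mathfrak m$. Then for an arbitrary cycle $s$, factoring $t=t_2e_{\operatorname{t}(s)}t_1$ gives $\overbar s\,\overbar t=\overbar{t_2st_1}\in R$, i.e.\ your $(\ast)$ holds with the single universal witness $f=\overbar t$, uniformly in $s$ and with no hypothesis on $\tilde\sigma$. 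If you replace your homology-class/$\sigma$-divisibility construction by this observation, the rest of your argument (the deduction $S\subseteq R_{\mathfrak m}$, hence $S_{\mathfrak n}=R_{\mathfrak m}$, the squeeze $R_{\mathfrak m}\subseteq (A_j)_{\mathfrak n\cap A_j}\subseteq S_{\mathfrak n}$ giving $A_{\mathfrak n}=A_{\mathfrak m}$, and the membership $\mathfrak n\in U_{S/R}$) goes through as you wrote it.
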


\begin{proof}
Fix $i \in Q_0$.
Since $V$ is simple and $\underline{\dim} V = 1^{Q_0}$, there is a cycle $t \in e_iAe_i$ which passes through each of $Q$ such that $\tilde{t} \not = 0$. 
Since $t$ passes through each vertex,  $\overline{t}$ is in $R$. 
Thus, since $\tilde{t} \not = 0$, $\overline{t}^{-1}$ is in $R_{\mathfrak{m}}$. 
Whence, $t^{-1}$ is in $A_{\mathfrak{m}} = A \otimes_R R_{\mathfrak{m}}$. 

Let $s \in A$ be any cycle. 
Factor $t$ into paths $t = t_2e_{\operatorname{t}(s)}t_1$. 
Then $\overbar{s} = \overline{t}^{-1}\overbar{t_2st_1}$ is in $R_{\mathfrak{m}}$.
Thus $S \subset R_{\mathfrak{m}} \subseteq S_{\mathfrak{n}}$, and so $S_{\mathfrak{n}} = R_{\mathfrak{m}}$. 
Consequently, 
\begin{equation*}
\bar{\tau}(e_i A_{\mathfrak{n}}e_i) \subseteq \cup_{j \in Q_0} \bar{\tau}(e_jA_{\mathfrak{n}}e_j) = S_{\mathfrak{n}} = R_{\mathfrak{m}} \subseteq \bar{\tau}(e_iA_{\mathfrak{m}}e_i) \subseteq \bar{\tau}(e_i A_{\mathfrak{n}}e_i).
\end{equation*} 
But then $\bar{\tau}(e_iA_{\mathfrak{n}}e_i) = \bar{\tau}(e_iA_{\mathfrak{m}}e_i)$.
Therefore $A_{\mathfrak{n}} = A_{\mathfrak{m}}$. 
\end{proof}

Let $\{ \varepsilon_i \ | \ i \in Q_0, \ \tilde{e}_i \not = 0 \}$ be a basis for $V$, where $e_i \varepsilon_j = \delta_{ij}\varepsilon_j$ whenever $\tilde{e}_i\not = 0$.
Consider the module epimorphisms over $A$ and over the cyclic localization $A_{\mathfrak{n}}$,
\begin{equation*} \label{delta map}
\delta: A \to V, \ \ \ \ \ \ \delta_{\mathfrak{n}}: A_{\mathfrak{n}} \to V,
\end{equation*}
defined on paths by $p \mapsto p \varepsilon_{\operatorname{t}(p)} = \tilde{p} \varepsilon_{\operatorname{h}(p)}$, and extended $k$-linearly to $A$ and $A_{\mathfrak{n}}$ respectively.

The kernels of the maps $\delta$ and $\delta_{\mathfrak{n}}$ are the first syzygies of projective resolutions of $V$ over $A$ and $A_{\mathfrak{n}}$. 
Since the relations of $A$ are binomial on the paths of $Q$, there are two types of generators for the kernels: 
\begin{itemize}
 \item[(a)] binomial generators, which are elements of the form $\tilde{p}q - \tilde{q}p$ for paths $p,q \in e_jA_{\mathfrak{n}}e_i$ satisfying $\tilde{p} \not = 0 \not = \tilde{q}$; and
 \item[(b)] monomial generators, which are paths $p$ satisfying $\tilde{p} = 0$. %of the form $ap \in A$ with $a \in Q_1$, $p \in Q_{\geq 0}$, $\tilde{p} \not = 0 = \tilde{a}$.
\end{itemize}

\begin{Proposition} \label{first}
As left ideals of $A_{\mathfrak{n}}$,
\begin{multline*} \label{kf2}
_{A_{\mathfrak{n}}} \! \left\langle \tilde{p}q - \tilde{q}p \ | \ \text{$p,q \in A_{\mathfrak{n}}$ paths with $\operatorname{t}(p) = \operatorname{t}(q)$, $\operatorname{h}(p) = \operatorname{h}(q)$, and $\tilde{p} \not = 0 \not = \tilde{q}$} \right\rangle \\ 
= \,
_{A_{\mathfrak{n}}} \! \left\langle s - \tilde{s}e_{\operatorname{t}(s)} \ | \ \text{$s \in A$ a cycle with no cyclic subpaths and $\tilde{s} \not = 0$} \right\rangle. 
\end{multline*}
\end{Proposition}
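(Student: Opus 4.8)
Write $J_1$ for the left ideal on the left-hand side (generated by the binomials $\tilde p q - \tilde q p$) and $J_2$ for the left ideal on the right-hand side (generated by the elements $\gamma_s := s - \tilde s e_{\operatorname{t}(s)}$, with $s$ a cycle having no cyclic subpaths and $\tilde s \neq 0$). The plan is to prove the two inclusions separately. The inclusion $J_2 \subseteq J_1$ is immediate: if $s$ is such a cycle, then $\tilde s \neq 0$ forces $\operatorname{t}(s)$ into the support of $V$, so $\tilde e_{\operatorname{t}(s)} = 1$, and $\gamma_s = -\bigl(\tilde e_{\operatorname{t}(s)}\, s - \tilde s\, e_{\operatorname{t}(s)}\bigr)$ is minus the binomial generator attached to the parallel paths $p = s$, $q = e_{\operatorname{t}(s)}$. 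So the substance is the inclusion $J_1 \subseteq J_2$.

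The engine for that inclusion is a \emph{reduction lemma}: for \emph{every} cycle $s$ in $Q$ with $\tilde s \neq 0$ — with no hypothesis on cyclic subpaths — the element $\gamma_s = s - \tilde s e_{\operatorname{t}(s)}$ already lies in $J_2$. I would prove this by induction on the length of $s$, the base case being exactly the defining generators of $J_2$. For the inductive step one peels off a minimal cyclic subpath $c$ (necessarily without proper cyclic subpaths, so $\gamma_c$ is a generator, and $\tilde c \neq 0$ since $\tilde s \neq 0$). If $c$ can be taken at the basepoint, $s = w c$ or $s = c w$ with $w$ a strictly shorter cycle at $\operatorname{t}(s)$, and the identities $\gamma_{wc} = w\gamma_c + \tilde c\,\gamma_w$ and $\gamma_{cw} = c\gamma_w + \tilde w\,\gamma_c$ exhibit $\gamma_s$ as a left $A_{\mathfrak n}$-combination of $\gamma_c \in J_2$ and $\gamma_w \in J_2$ (induction); both terms are genuine \emph{left} multiples, which is what a left ideal absorbs. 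The remaining case is when every minimal cyclic subpath $c$ is based at a vertex $m \neq \operatorname{t}(s)$ through which $s$ passes only once while meeting $\operatorname{t}(s)$ only at its endpoints; there I would rotate to the cyclic permutation $\hat s$ based at $m$ (which does revisit $m$, so the previous case gives $\gamma_{\hat s} \in J_2$) and transport back along the relation $s\, s_2 = s_2\, \hat s$, equivalently $\gamma_s\, s_2 = s_2\,\gamma_{\hat s}$, where $s = s_2 c s_1$; here one invokes that $s$, being a cycle with $\tilde s \neq 0$, is locally invertible in $A_{\mathfrak n}$, so that $s_2$ is too by Lemma \ref{lm:simple non max dim}. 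I expect this last case — converting $\gamma_s\, s_2 = s_2\,\gamma_{\hat s}$ into $\gamma_s \in J_2$ without reintroducing a right multiplication that a left ideal need not absorb — to be the genuine technical obstacle, and the step where the cyclic-localization and geodesic hypotheses do real work.

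Granting the reduction lemma, the inclusion $J_1 \subseteq J_2$ is short. Let $\tilde p q - \tilde q p$ be a binomial generator, with $p, q$ paths from $i := \operatorname{t}(p)$ to $j := \operatorname{h}(p)$ and $\tilde p, \tilde q \neq 0$. When $i = j$ the paths $p, q$ are cycles, and the expansion $\tilde p q - \tilde q p = \tilde p\,\gamma_q - \tilde q\,\gamma_p$ (the terms in $e_i$ cancelling) together with $\gamma_p, \gamma_q \in J_2$ finishes it. When $i \neq j$: since $\tilde p \neq 0$, both $i, j$ lie in the support of $V$, and simplicity gives $e_iV = e_iAe_j\varepsilon_j \neq 0$, so there is an honest path $r$ from $j$ to $i$ with $\tilde r \neq 0$. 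The key point is that the cycles $qrp$ and $prq$ have the same image under $\eta$, because the labels $\overbar p, \overbar q, \overbar r$ commute in $k[\mathcal P]$; hence $qrp = prq$ in $A$. Expanding $p\,\gamma_{rq} - q\,\gamma_{rp}$ and using $qrp = prq$ and $\widetilde{rp} = \tilde r\tilde p$, $\widetilde{rq} = \tilde r\tilde q$, the cross terms collapse to
\begin{equation*}
p\,\gamma_{rq} - q\,\gamma_{rp} = \tilde r\,(\tilde p q - \tilde q p).
\end{equation*}
Now $\gamma_{rp}, \gamma_{rq} \in J_2$ by the reduction lemma (the cycles $rp, rq$ in general \emph{do} have cyclic subpaths, which is precisely why the lemma was needed in that generality), and they lie in $e_iA_{\mathfrak n}e_i$, so $p\,\gamma_{rq}$ and $q\,\gamma_{rp}$ are left multiples of elements of $J_2$, hence in $J_2$; dividing by the scalar $\tilde r$ yields $\tilde p q - \tilde q p \in J_2$.

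It remains to reduce to the case where $p$ and $q$ are honest paths in $Q$. If a path in $A_{\mathfrak n}$ contains an inverse arrow $a^{-1}$, then $a$ is locally invertible in $A_{\mathfrak n}$, which via its complementary arc forces $\overbar\sigma \notin \mathfrak n$, so $\sigma \in R$ becomes a central unit of $A_{\mathfrak n}$; clearing all inverse arrows by complementary arcs and collecting the resulting powers of $\sigma^{-1}$ writes $p = p''\,\overbar\sigma^{-L}$ and $q = q''\,\overbar\sigma^{-L}$ with $p'', q''$ honest paths from $i$ to $j$ and $\widetilde{p''}, \widetilde{q''} \neq 0$, and then $\tilde p q - \tilde q p$ equals $\widetilde{p''}q'' - \widetilde{q''}p''$ up to multiplication by the central unit $\overbar\sigma^{-L}$ and a nonzero scalar, so membership in $J_2$ is unchanged. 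Combining this reduction with the two preceding displays completes the proof.
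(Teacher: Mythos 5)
Your overall architecture (prove only the hard inclusion, establish a reduction lemma for cycles, then generate the binomials via a return path) parallels the paper, and your $i \neq j$ step is a genuinely nice shortcut: producing $r$ from simplicity and using $\eta$-commutativity to get $qrp = prq$, so that $p\,\gamma_{rq} - q\,\gamma_{rp} = \tilde r(\tilde p q - \tilde q p)$, is cleaner than the paper's corresponding manipulations. But there is a genuine gap, and it is exactly the step you flag yourself: the remaining case of your reduction lemma is not proved, and it is load-bearing, since the cycles $rp$, $rq$ you feed into the lemma will in general have cyclic subpaths only away from the basepoint. Your relation $\gamma_s s_2 = s_2 \gamma_{\hat s}$ only gives $\gamma_s = s_2\gamma_{\hat s}s_2^{-1}$, a \emph{right} multiple, which a left ideal need not absorb; and the natural repairs go in circles. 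For instance, writing $s = s_2 c s_1$ and $w := s_2 s_1$, one has $\gamma_s = w\bigl(s_1^{-1}c s_1 - \tilde c e_i\bigr) + \tilde c\,\gamma_w$, but $s_1^{-1}cs_1$ is a cycle in the localization containing inverse arrows and need not be shorter than $s$, so an induction on length of honest cycles does not close. This is precisely where the paper's proof does its work: it proves the stronger statement that $r - \tilde r e_{\operatorname{t}(r)}$ lies in the right-hand ideal for \emph{every} cycle $r$ in $A_{\mathfrak n}$, inverse arrows allowed, using local invertibility and left-multiplication identities such as $ts - e_i = t(s-e_i) + (t-e_i)$, $s^{-1} - e_i = -s^{-1}(s-e_i)$, $t_1^{-1}s_1 - e_i = (s_2t_1)^{-1}(s-e_i) + \bigl((s_2t_1)^{-1} - e_i\bigr)$ and $s_3r_2^{-1}s_1 - e_i = s_3r_1(t_1^{-1}s_1 - e_i) + (s_3r_1 - e_i)$, with the induction organized over these generalized cycles; working in that larger class is what avoids ever conjugating a generator on the right. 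Until you supply an argument of this kind for your remaining case, the inclusion $J_1 \subseteq J_2$ is not established.

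A secondary problem is your reduction to honest paths: local invertibility of an arrow $a$ in $A_{\mathfrak n}$ does not force $\overbar{\sigma} \notin \mathfrak n$ --- it only requires $\overbar{a}$ to divide, via some path complement, a cycle monomial outside $\mathfrak n$, and the complementary arc of $a$ need not be supported. Indeed, whenever $\underline{\dim}\,V \neq 1^{Q_0}$ one has $\tilde\sigma_i = 0$ (as used in the proof of Theorem \ref{pd theorem}), while arrows lying on supported cycles are still locally invertible by Lemma \ref{lm:simple non max dim}; so $\sigma$ is typically not a unit in $A_{\mathfrak n}$ and your clearing of inverse arrows fails. Fortunately this step is unnecessary: $A_{\mathfrak n}$ sits inside $M_n(\operatorname{Frac}B)$, so the single nonzero entries of $prq$ and $qrp$ are equal products in a commutative ring and $prq = qrp$ holds verbatim for localized paths. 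The essential defect to repair is the unproved case of the reduction lemma.
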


\begin{proof}
Let $p,q$ be paths in $A_{\mathfrak{n}}$ with $\operatorname{t}(p) = \operatorname{t}(q)$, $\operatorname{h}(p) = \operatorname{h}(q)$, and $\tilde{p} \not = 0 \not = \tilde{q}$.
It suffices to omit the scalars $\tilde{p}, \tilde{q}, \tilde{s} \in k$ and show that $p - q$ is in the left ideal
\begin{equation*}
J := \, _{A_{\mathfrak{n}}} \! \left\langle s - e_{\operatorname{t}(s)} \ | \ \text{$s \in A$ a cycle with no cyclic subpaths and $\tilde{s} \not = 0$} \right\rangle.
\end{equation*}

In $A_{\mathfrak{n}}$, each arrow supported on $V$ is locally invertible, by Lemma \ref{lm:simple non max dim}.
Fix $i \in Q_0$.
Let $s,t$ be cycles in $e_iAe_i$ with $\tilde{s} \not = 0 \not = \tilde{t}$.
Then
\begin{equation*}
\begin{split}
ts - e_i & = t(s-e_i) + (t - e_i) \in J, \\
s^{-1} - e_i & = -s^{-1}(s - e_i) \in J, \\
s^{-2} - e_i & = -s^{-1}(s^{-1} + e_i)(s - e_i) \in J. 
\end{split}
\end{equation*}
It thus follows by induction that
\begin{equation} \label{kh}
_{A_{\mathfrak{n}}} \! \left\langle s^n - e_{\operatorname{t}(s)} \ | \ \text{$s \in A$ a cycle with $\tilde{s} \not = 0$; $n \in \mathbb{Z}$} \right\rangle \subseteq J.
\end{equation}

Now suppose $s,t$ factor into paths $s = s_2s_1$, $t = t_2t_1$ in $A$, with $\operatorname{h}(s_1) = \operatorname{h}(t_1)$.
Then (\ref{kh}) implies
\begin{equation} \label{kh2}
t_1^{-1}s_1 - e_i = (s_2t_1)^{-1}(s - e_i) + ((s_2t_1)^{-1} - e_i) \in J.
\end{equation}
Whence,
\begin{equation} \label{kf}
s_1 - t_1 = t_1(t_1^{-1}s_1 - e_i) \in J. 
\end{equation}
Furthermore, if $t_1$ factors into paths $t_1 = r_2r_1$ in $A$, and $s_3 \in e_iAe_{\operatorname{h}(r_1)}$ is another path supported on $V$, then (\ref{kh2}) implies
\begin{equation*}
s_3r_2^{-1}s_1 - e_i = s_3r_1(t_1^{-1}s_1 - e_i) + (s_3r_1 - e_i) \in J.
\end{equation*}
Therefore, by induction, if $r$ is any cycle \textit{in the localization} $A_{\mathfrak{n}}$ with $\tilde{r} \not = 0$, then $r - e_{\operatorname{t}(r)} \in J$. 
Consequently, (\ref{kf}) implies that for our original paths $p,q \in A_{\mathfrak{n}}$, their difference $p - q$ is in $J$.
\end{proof}

\begin{Lemma} \label{same}
If $i,j \in Q_0$ are vertices for which $\tilde{e}_i \not = 0 \not = \tilde{e}_j$, then 
\begin{equation*}
\bar{\tau}(e_iA_{\mathfrak{n}}e_i) = \bar{\tau}(e_jA_{\mathfrak{n}}e_j).
\end{equation*}
\end{Lemma}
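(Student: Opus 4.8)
The plan is to show that the two subrings $\bar\tau(e_iA_{\mathfrak n}e_i)$ and $\bar\tau(e_jA_{\mathfrak n}e_j)$ of $\operatorname{Frac}B$ coincide by establishing mutual inclusion, using the geodesic hypothesis to produce a locally invertible path between $i$ and $j$ along which one can transport cycles. Since $\tilde e_i\not=0\not=\tilde e_j$, both vertices lie in the support of $V$. First I would produce a path $w\in e_jAe_i$ with $\tilde w\not=0$: because $A$ is geodesic and $V$ is simple, the support of $V$ is connected in a strong sense — one can find a cycle through $i$ acting nonzero on $V$, and using Definition~\ref{geodesic def} together with the fact that $V$ is simple (so any vertex in the support is reachable by an arrow acting nonzero, otherwise $e_jV$ would be a proper submodule), I can route a path from $i$ to $j$ all of whose arrows act by nonzero scalars. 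Each such arrow is a subpath of a geodesic cycle acting nonzero on $V$, hence locally invertible in $A_{\mathfrak n}$ by Lemma~\ref{lm:simple non max dim}, so $w$ is locally invertible with inverse $w^{-1}\in e_iA_{\mathfrak n}e_j$ and $\tilde w^{-1}=\tilde w^{\,-1}\not=0$.

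Given such a $w$, the inclusion $\bar\tau(e_iA_{\mathfrak n}e_i)\subseteq\bar\tau(e_jA_{\mathfrak n}e_j)$ follows by conjugation: for any $s\in e_iA_{\mathfrak n}e_i$ the element $wsw^{-1}$ lies in $e_jA_{\mathfrak n}e_j$, and $\bar\tau(wsw^{-1})=\bar\tau(w)\bar\tau(s)\bar\tau(w^{-1})=\bar\tau(w)\bar\tau(s)\bar\tau(w)^{-1}$. But $\bar\tau(w)$ is a monomial in the commutative polynomial ring $k[\mathcal S]$, so $\bar\tau(w)\bar\tau(w)^{-1}=1$ and $\bar\tau(wsw^{-1})=\bar\tau(s)$. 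Hence $\bar\tau(s)\in\bar\tau(e_jA_{\mathfrak n}e_j)$. The reverse inclusion is symmetric, using $w^{-1}$ in place of $w$. This gives the claimed equality.

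The main obstacle I anticipate is the first step — constructing the path $w\in e_jAe_i$ all of whose arrows act by nonzero scalars on $V$, i.e.\ showing the support of $V$ is connected by arrows acting invertibly. The point is that $V$ simple forces this: let $W$ be the span of $\varepsilon_k$ over those $k$ reachable from $i$ by arrows acting nonzero; then $W$ is $Q$-stable (any arrow leaving a reachable vertex either acts by zero, keeping us in $W$, or acts nonzero, reaching another vertex that is then by definition in $W$), so $W$ is an $A$-submodule, hence $W=V$ and every vertex of the support, in particular $j$, is reachable. One must take a little care that the relations of $A$ (which identify binomials) do not interfere — but a path in $Q_{\geq 1}$ maps to a well-defined element of $A$ and its action on $V$ is the product of the scalars of its arrows, so a path of nonzero-acting arrows acts by a nonzero scalar. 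Once $w$ is in hand, the rest is the routine monomial bookkeeping sketched above; alternatively, one can invoke the already-proved fact (in the proof of the preceding proposition and in Proposition~\ref{first}) that in $A_{\mathfrak n}$ every cycle through the support with nonzero scalar becomes a unit, and deduce the statement directly from $S_{\mathfrak n}=\bar\tau(e_iA_{\mathfrak n}e_i)$ when $\underline{\dim}V=1^{Q_0}$, reducing the general case to the full-support subquiver.
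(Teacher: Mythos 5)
Your proposal is correct and takes essentially the same route as the paper: use simplicity of $V$ to produce connecting paths between $i$ and $j$ acting by nonzero scalars, note they become invertible in the localization $A_{\mathfrak{n}}$, and transport cycles by conjugation, which leaves the $\bar{\tau}$-image unchanged. The only cosmetic difference is that the paper inverts the composite cycle $t = t_2t_1$ (immediate from the definition of the cyclic localization, since $\bar{\tau}(t) \notin \mathfrak{n}$) and uses $t^{-1}t_2st_1$, rather than inverting the connecting path itself via Lemma~\ref{lm:simple non max dim}.
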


\begin{proof}
Let $s$ be a cycle in $e_jA_{\mathfrak{n}}e_j$.
Since $V$ is simple, there are paths $t_1 \in e_jAe_i$ and $t_2 \in e_iAe_j$ such that $\tilde{t}_1 \not = 0 \not = \tilde{t}_2$.
Set $t := t_2t_1$ (similar to the proof of Proposition \ref{max dim prop}).
Then the cycle $t^{-1} t_2st_1$ is in $e_iA_{\mathfrak{n}}e_i$ and has $\bar{\tau}$-image $\overbar{s}$.
\end{proof}

\begin{Proposition} \label{second}
As left ideals of $A_{\mathfrak{n}}$,
\begin{multline*}
_{A_{\mathfrak{n}}} \! \left\langle p \ | \ \text{$p \in A_{\mathfrak{n}}$ a path with $\tilde{p} = 0$ and $\tilde{e}_{\operatorname{h}(p)} \not = 0$} \right\rangle \\ \subseteq \,
_{A_{\mathfrak{n}}} \! \left\langle s \ | \ \text{$s \in A$ a cycle with no cyclic subpaths and $\tilde{s} = 0$} \right\rangle.
\end{multline*}
\end{Proposition}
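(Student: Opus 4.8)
The plan is to show that every generating path of the left-hand side lies in the left ideal $K := {}_{A_{\mathfrak{n}}}\!\langle s \mid s \in A \text{ a cycle with no cyclic subpaths and } \tilde s = 0\rangle$; that is, $p \in K$ for each path $p$ in $A_{\mathfrak{n}}$ with $\tilde p = 0$ and $\tilde e_{\operatorname{h}(p)} \neq 0$. The argument is an induction on the length of $p$, split according to whether $\operatorname{t}(p)$ is supported on $V$.

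Suppose first $\tilde e_{\operatorname{t}(p)} \neq 0$. Since $V$ is simple and $\operatorname{t}(p), \operatorname{h}(p)$ are both supported, there is a path $q \in e_{\operatorname{t}(p)}Ae_{\operatorname{h}(p)}$ with $\tilde q \neq 0$, and $q$ is locally invertible in $A_{\mathfrak{n}}$ since every arrow supported on $V$ is (Lemma \ref{lm:simple non max dim}) and hence so is every product of such. Then $qp$ is a cycle at $\operatorname{t}(p)$ with $\widetilde{qp} = 0$, and the identity $p = q^{-1}(qp)$ exhibits $p$ as a \emph{left} $A_{\mathfrak{n}}$-multiple of $qp$. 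So in this case it suffices to prove the auxiliary claim that every cycle $c$ with $\tilde c = 0$ lies in $K$. I would prove this by a nested induction on the length of $c$: if $c$ has no cyclic subpath it is a generator of $K$; otherwise factor $c = c_2 r c_1$ with $r$ a minimal, hence no-cyclic-subpath, cyclic subpath. If $\tilde r = 0$, then a cyclic permutation of $c$ of the form $(\text{path})\cdot r$ lies in $K$ by induction, and one transports this back to $c$ itself; if $\tilde r \neq 0$, then $\tilde c = 0$ forces $\tilde c_1 = 0$ or $\tilde c_2 = 0$, and — since the endpoints of $r$ are then supported — the corresponding factor is a strictly shorter generator of the left-hand side, so the outer induction applies. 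In either case one uses the relations of $A_{\mathfrak{n}}$ together with the local invertibility of cycles with nonzero scalar to re-assemble $c$ as a left $A_{\mathfrak{n}}$-combination of no-cyclic-subpath cycles with trivial scalar.

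Now suppose $\tilde e_{\operatorname{t}(p)} = 0$. Reading the arrows of $p$ from the head toward the tail, the partial scalars start out nonzero (their value at the head being $\tilde e_{\operatorname{h}(p)} \neq 0$) and eventually vanish; let $b$ be the step at which they first vanish. This step cannot be an inverse arrow — an inverse arrow has trivial scalar only when both its endpoints are unsupported, whereas the vertex of $b$ on the head side is supported — so $b$ is a genuine arrow of $Q$ with $\tilde b = 0$. Writing $p = u\,b\,q$ with $u$ the head part, which has nonzero scalar and is therefore locally invertible, we get $p = u\,(bq)$, a left multiple of the strictly shorter generator $bq$ whenever $u$ is nontrivial; so the induction reduces us to the residual case $p = bq$ with $b$ the final arrow. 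Here $b$ lies on the unit cycle $\sigma_{\operatorname{h}(b)} = b\,c$, which has no cyclic subpaths and satisfies $\widetilde{\sigma_{\operatorname{h}(b)}} = \tilde b\,\tilde c = 0$, hence lies in $K$; using this together with the complementary-arc identity $a^{-1}\sigma_{\operatorname{t}(c)} = c$ and the homotopy/homology rewriting (\ref{p = q sigma}), I would express $bq$ as a left $A_{\mathfrak{n}}$-multiple of a no-cyclic-subpath cycle with trivial scalar.

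The main obstacle is exactly this passage — in both the auxiliary cycle claim and the residual-arrow case — from ``$p$ completes to, or sits on, a cycle lying in $K$'' to ``$p$ itself lies in the \emph{left} ideal $K$'': because $K$ is one-sided one cannot simply cancel a non-invertible path off the right-hand end of such a completion, and the paths that need cancelling — those terminating at unsupported vertices — are precisely the non-invertible ones. Getting past it requires genuinely rewriting $p$ as a left multiple of a no-cyclic-subpath cycle with trivial scalar, which is where the combinatorics of unit cycles, the identity $a^{-1}\sigma_{\operatorname{t}(c)} = c$, and the relation (\ref{p = q sigma}) do the essential work rather than the bare length induction; the cyclic-subpath bookkeeping in the auxiliary claim is a secondary difficulty of the same flavour.
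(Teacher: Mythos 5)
Your opening move in the supported--tail case is exactly the paper's proof: since $\tilde{e}_{\operatorname{h}(p)} \neq 0$ and $V$ is simple, there is a path $q$ from $\operatorname{h}(p)$ back to $\operatorname{t}(p)$ with $\tilde{q} \neq 0$, hence locally invertible in $A_{\mathfrak{n}}$ (Lemma \ref{lm:simple non max dim}), and $p = q^{-1}(qp)$ exhibits $p$ as a \emph{left} multiple of the zero--scalar cycle $qp$. Note the completion is attached at the head, where the vertex is supported, and then removed as a left factor; so the ``one--sidedness'' obstacle you emphasize (cancelling a non-invertible path off the right-hand end) does not actually arise on this route.

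The genuine gap is what comes next. All that remains is to replace $qp$ by a generator of $K$, i.e.\ a zero--scalar cycle \emph{without cyclic subpaths}, and here you substitute the much stronger auxiliary claim that every cycle $c$ with $\tilde{c} = 0$ lies in $K$ --- and your induction never performs the decisive step. In the sub-case $c = c_2 r c_1$ with $\tilde{r} = 0$ you say a cyclic permutation ending in $r$ lies in $K$ ``and one transports this back to $c$ itself''; but that transport needs right multiplication by $c_1$ or left cancellation of $c_2$, precisely the operations that are unavailable when the intermediate vertices are unsupported. Likewise the residual case $p = bq$ of your second branch ends with ``I would express $bq$ as a left multiple of a no-cyclic-subpath cycle'' with no argument, and you yourself flag this rewriting as the unresolved main obstacle --- so the proof is not closed. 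The paper finishes much more cheaply: it only treats generators of the form $p = aq_1$ with $\tilde{a} = 0 \neq \tilde{q}_1$ (so $\operatorname{t}(p)$ is supported and the zero arrow sits at the head), completes to the cycle $s = q_2 a q_1$ with $\tilde{q}_2 \neq 0$, and invokes Lemma \ref{same} to arrange that $s$ may be assumed to have no cyclic subpaths --- that is, it adjusts the choice of the completing path $q_2$ rather than proving anything about arbitrary zero--scalar cycles --- whence $p = q_2^{-1} s \in A_{\mathfrak{n}}s$. Your separate branch $\tilde{e}_{\operatorname{t}(p)} = 0$ is not addressed by the paper's proof at all; in the application to Theorem \ref{main 2} such generators can be absorbed via Proposition \ref{third}, since $p = p\, e_{\operatorname{t}(p)}$ and $e_{\operatorname{t}(p)}$ is a trivial path whose head is unsupported, so nothing in that branch needs (or receives) the elaborate unit-cycle rewriting you sketch.
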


\begin{proof}
Consider a path $p = aq_1 \in A$ with $\tilde{a} = 0 \not = \tilde{q}_1$ and $\tilde{e}_{\operatorname{h}(p)} \not = 0$.
Set $i := \operatorname{t}(p)$.

Since $V$ is simple and $\tilde{e}_{\operatorname{h}(p)} \not = 0$, there is a path $q_2 \in e_iAe_{\operatorname{h}(a)}$ such that $\tilde{q}_2 \not = 0$.
Set $s := q_2aq_1 \in e_iAe_i$.
We may assume that $s$ has no cyclic subpaths by Lemma \ref{same}.
Furthermore, $q_2$ is a locally invertible path in $A_{\mathfrak{n}}$, by Lemma \ref{lm:simple non max dim}.
Therefore $p = q^{-1}s$ is in $A_{\mathfrak{n}}s$.
\end{proof}

Denote by $\mathscr{P}_i$ the set of paths $p$ in $A_{\mathfrak{n}}e_i$ with no cyclic subpaths such that $\tilde{e}_{\operatorname{h}(p)} = 0$, and where $p$ is minimal with respect to this property.

\begin{Proposition} \label{third}
Fix $i \in Q_0$.
As left ideals of $A_{\mathfrak{n}}$,
\begin{multline*}
_{A_{\mathfrak{n}}} \! \left\langle p \ | \ \text{$p \in A_{\mathfrak{n}}e_i$ a path with $\tilde{e}_{\operatorname{h}(p)} = 0$} \right\rangle \\ \subseteq \,
_{A_{\mathfrak{n}}} \! \left\langle p, \, s - \tilde{s}e_i \ | \ \text{$p \in \mathscr{P}_i$; $s \in e_iAe_i$ a cycle with no cyclic subpaths} \right\rangle.
\end{multline*}
\end{Proposition}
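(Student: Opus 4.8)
Write $J$ for the left ideal on the right-hand side of the asserted inclusion, and denote by $|p|$ the word length of a path $p$ of $A_{\mathfrak{n}}$ (counting arrows and inverse arrows). The plan is to prove, by induction on $|p|$, that every path $p \in A_{\mathfrak{n}}e_i$ with $\tilde{e}_{\operatorname{h}(p)} = 0$ lies in $J$. First dispose of the trivial cases: if $\tilde{e}_i = 0$ then $e_i \in \mathscr{P}_i$, so $A_{\mathfrak{n}}e_i = A_{\mathfrak{n}}\, e_i \subseteq J$ and we are done; so assume $\tilde{e}_i \neq 0$. Given $p$, let $q$ be its shortest initial subpath with $\tilde{e}_{\operatorname{h}(q)} = 0$, and write $p = p'' q$; since $p \in A_{\mathfrak{n}}\, q$ it suffices to show $q \in J$. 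By minimality, every proper initial subpath of $q$ has nonzero head-scalar, so in particular every interior vertex of $q$ is supported on $V$ — an interior unsupported vertex would yield a shorter initial subpath with annihilated head. If $q$ has no cyclic subpath, then $q \in \mathscr{P}_i \subseteq J$ and we are done; so from now on $q$ has a cyclic subpath.

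Next I claim that $J$ contains $t - \tilde{t}\, e_i$ for every cycle $t$ of $A_{\mathfrak{n}}$ based at $i$ with $\tilde{t} \neq 0$. This is the argument of Proposition \ref{first} localized at the single vertex $i$: one runs the inductive manipulations there — products $ts$, inverse powers $s^{-n}$, and the combinations $t_1^{-1}s_1$ and $s_3 r_2^{-1}s_1$ — and checks that every term produced is again a cycle based at $i$, so that only the generators $s - \tilde{s}\, e_i$ of $J$ (the ones indexed by $i$) are used; as in that proof, every arrow, and hence every path, supported on $V$ is locally invertible in $A_{\mathfrak{n}}$ (Lemma \ref{lm:simple non max dim}). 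In particular, for any two paths $u, u' \colon i \to w$ with $\tilde{u} = \tilde{u}' \neq 0$, one obtains $u - u' \in J$.

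Now I excise a cyclic subpath of $q$. Write $q = q_2 r q_1$ with $r$ a cyclic subpath, a cycle based at a vertex $v$. Since $q$ is the shortest initial subpath with annihilated head, $r q_1$ is a proper initial subpath, so $\tilde{e}_v \neq 0$; and a short check (again using minimality) shows that $r$ cannot contain the last arrow of $q$. By the first paragraph, $r$ then runs entirely through vertices supported on $V$, and — choosing $r$ and this factorization with some care, which is where the hypothesis that $q$ is minimal-to-annihilated really enters, to rule out arrows of $q$ that act by $0$ inside the relevant segment — we may assume $\tilde{r} \neq 0$ and $\tilde{q}_1 \neq 0$. Choose a path $u \colon v \to i$ with $\tilde{u} \neq 0$ (one exists because $V$ is simple and $\tilde{e}_v, \tilde{e}_i \neq 0$). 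Then $u r q_1$ and $u q_1$ are cycles based at $i$ with nonzero scalars, so by the previous paragraph
\[
u r q_1 - \tilde{r}\, u q_1 \;=\; \bigl(u r q_1 - \tilde{u}\tilde{r}\tilde{q}_1\, e_i\bigr) \;-\; \tilde{r}\,\bigl(u q_1 - \tilde{u}\tilde{q}_1\, e_i\bigr) \;\in\; J,
\]
and left-multiplication by $u^{-1}$ gives $r q_1 - \tilde{r}\, q_1 \in J$, whence $q = q_2 r q_1 \equiv \tilde{r}\, q_2 q_1 \pmod{J}$. As $q_2 q_1 \in A_{\mathfrak{n}}e_i$ has annihilated head and $|q_2 q_1| = |q| - |r| < |p|$, the induction hypothesis gives $q_2 q_1 \in J$, hence $q \in J$; this closes the induction.

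The one genuinely delicate point is the clause in the last paragraph: that the cyclic subpath $r$ and the prefix $q_1$ can be chosen to avoid arrows acting by $0$ on $V$ (equivalently, so that $\tilde{r} \neq 0 \neq \tilde{q}_1$), which is exactly what makes the transport to cycles based at $i$ available. Everything else is formal — the first paragraph is bookkeeping, the second is a rerun of Proposition \ref{first} at a fixed vertex, and the excision strictly decreases length. Arrows acting by $0$ on $V$ are the only place where the geodesic hypothesis and the structure of simple $A$-modules (cf.\ Example \ref{ex:simple-modules}) genuinely enter, and checking that they cannot obstruct the excision is the step I expect to cost the most effort.
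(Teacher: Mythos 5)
Your overall plan (reduce to the minimal prefix $q$, then excise cyclic subpaths by induction on length) is a genuinely different route from the paper's, but it has a real gap at exactly the point you flag: the assumption that the factorization $q = q_2 r q_1$ can be chosen with $\tilde{r} \neq 0 \neq \tilde{q}_1$. Minimality of $q$ only forces the \emph{interior vertices} of $q$ to be supported ($\tilde{e}_v \neq 0$); it says nothing about the arrows. Since $\dim e_jV \leq 1$ for all $j$ (Lemma \ref{lm:simple}), $\tilde{r} \neq 0$ means every arrow of $r$ acts by a nonzero scalar, and an arrow can act by zero even when both of its endpoints carry nonzero vector spaces: simplicity of $V$ only guarantees a nonzero \emph{path} between any two supported vertices, not that every arrow between them is nonzero (compare the second and third families in Example \ref{ex:simple-modules}, where the arrows not drawn in blue act by zero while their endpoints may well be supported). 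If such an arrow lies in $q_1$ or in $r$, then $\widetilde{u r q_1} = 0$, your step relying on cycles at $i$ with nonzero scalar is unavailable, and the excision identity collapses. Tellingly, your argument never uses the generators $s - \tilde{s}e_i = s$ with $\tilde{s} = 0$, which sit in the right-hand ideal precisely to absorb such dying segments; any complete proof must invoke them (or an equivalent device), so this is not a cosmetic omission.

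The paper's proof sidesteps the issue by never requiring nonvanishing scalars: it combines the telescoping identity $ps = p(s - \tilde{s}e_i) + \tilde{s}p$ (and its iterates $ps^n$), valid even when $\tilde{s} = 0$, with Lemma \ref{same}, which trades a cyclic subpath $r$ based at a supported interior vertex $v$ for a cycle $r'$ at $i$ with the same $\bar{\tau}$-label; then $q_2 r q_1$ and $q_2 q_1 r'$ are equal in $A_{\mathfrak{n}}$ because they have equal labels and endpoints, with no appeal to local invertibility of $r$ or $q_1$. In other words, the transport mechanism is label equality, not invertibility of supported paths, and that is what makes the zero-scalar case harmless. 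A milder instance of the same problem sits in your step claiming Proposition \ref{first} ``localizes at the vertex $i$'': a cycle at $i$ whose cyclic subpaths are based at other vertices is not a product of cycles at $i$ in $A$, so reducing it to the no-cyclic-subpath generators $s - \tilde{s}e_i$ at $i$ again needs the Lemma \ref{same} transport, not just the manipulations displayed in Proposition \ref{first}. If you replace the $u$-conjugation device by the label transport and bring in the zero-scalar generators (a Proposition \ref{second}-style completion of the dying prefix to a cycle at $i$ would do), your induction can likely be repaired, but as written the key step is a genuine gap rather than a routine verification.
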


\begin{proof}
Let $p \in A_{\mathfrak{n}}e_i$ be a path for which $\tilde{e}_{\operatorname{h}(p)} = 0$.
Let $s$ be a cycle in $e_iA_{\mathfrak{n}}e_i$.
We claim that $ps^n$ is in the left ideal $\left\langle p, s - \tilde{s}e_i \right\rangle \subset A_{\mathfrak{n}}$.
First observe that
\begin{equation*}
ps = p(s - \tilde{s}e_i) + \tilde{s}p \in \, _{A_{\mathfrak{n}}} \! \left\langle p, s - \tilde{s}e_i \right\rangle.
\end{equation*}
Then, by induction,
\begin{equation*}
ps^n = p(s^{n-1} - \tilde{s}^{n-1}e_i)(s - \tilde{s}e_i) + \tilde{s}(ps^{n-1}) + \tilde{s}^{n-1}(ps) - \tilde{s}^np \in \, _{A_{\mathfrak{n}}} \! \left\langle p, s - \tilde{s}e_i \right\rangle.
\end{equation*}
The proposition then follows from Lemma \ref{same}.
\end{proof}

\begin{Theorem} \label{main 2}
As left ideals of $A_{\mathfrak{n}}$,
\begin{equation*}
\begin{split}
\ker \delta_{\mathfrak{n}}
& = \,
_{A_{\mathfrak{n}}} \! \left\langle \mathscr{P}_i, \, s - \tilde{s}e_{\operatorname{t}(s)} \ | \ \text{$i \in Q_0$; $s \in A$ a cycle with no cyclic subpaths} \right\rangle. %\cup \, _{A_{\mathfrak{n}}} \! \left\langle \mathscr{P}_i \, | \, i \in Q_0 \right\rangle.
\end{split}
\end{equation*}
\end{Theorem}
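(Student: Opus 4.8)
The plan is to establish the claimed equality of left ideals by a double inclusion, bootstrapping from the decomposition of $\ker\delta_{\mathfrak n}$ into binomial and monomial generators recorded just before the statement. Write
\[
J_1 := \, _{A_{\mathfrak n}}\!\left\langle \tilde p q - \tilde q p \ | \ p,q \text{ paths in } A_{\mathfrak n},\ \operatorname{t}(p)=\operatorname{t}(q),\ \operatorname{h}(p)=\operatorname{h}(q),\ \tilde p\neq 0\neq\tilde q\right\rangle
\]
for the subideal generated by type (a) generators, and
\[
J_2 := \, _{A_{\mathfrak n}}\!\left\langle p \ | \ p\in A_{\mathfrak n} \text{ a path with } \tilde p = 0 \right\rangle
\]
for the subideal generated by type (b) generators, so that $\ker\delta_{\mathfrak n} = J_1 + J_2$. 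I would then handle the two pieces separately using the three preceding propositions, and finally reassemble.

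First, for $J_1$: Proposition \ref{first} already identifies $J_1$ (after clearing the irrelevant scalars) with $_{A_{\mathfrak n}}\!\left\langle s - \tilde s e_{\operatorname{t}(s)} \ | \ s\in A \text{ a cycle with no cyclic subpaths},\ \tilde s\neq 0\right\rangle$, which is visibly contained in the right-hand side of the theorem. Conversely, each such binomial generator $s - \tilde s e_{\operatorname{t}(s)}$ is itself of the form $\tilde p q - \tilde q p$ with $q = s$, $p = e_{\operatorname{t}(s)}$, $\tilde q = \tilde s$, $\tilde p = \tilde e_{\operatorname{t}(s)} = 1$ (note $\tilde s\neq 0$ forces $\tilde e_{\operatorname{t}(s)}\neq 0$), so it lies in $J_1\subseteq\ker\delta_{\mathfrak n}$. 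Thus the binomial part of the right-hand side is exactly $J_1$.

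Second, for $J_2$: I need to show $J_2$ is contained in the ideal generated by $\{\mathscr P_i : i\in Q_0\}$ together with all the binomials $s - \tilde s e_{\operatorname{t}(s)}$; the reverse containment of the monomial generators $\mathscr P_i$ into $J_2 \subseteq \ker\delta_{\mathfrak n}$ is immediate since a path $p\in\mathscr P_i$ has $\tilde p = 0$ by definition of $\mathscr P_i$ (its head has $\tilde e_{\operatorname{h}(p)}=0$). For the forward direction, take any path $p$ with $\tilde p = 0$. If $\tilde e_{\operatorname{h}(p)}=0$, then $p$ factors through a minimal such prefix, i.e. $p = p'' p'$ with $p'\in\mathscr P_{\operatorname{t}(p)}$ (after possibly inserting cycles, controlled by Lemma \ref{same} as in Proposition \ref{third}), so $p\in A_{\mathfrak n}\mathscr P_{\operatorname{t}(p)}$; more carefully this is exactly the content of Proposition \ref{third}, which reduces $_{A_{\mathfrak n}}\!\langle p\mid \tilde e_{\operatorname{h}(p)}=0\rangle$ into $_{A_{\mathfrak n}}\!\langle \mathscr P_i,\ s-\tilde s e_i\rangle$. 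If instead $\tilde e_{\operatorname{h}(p)}\neq 0$ but $\tilde p = 0$, then Proposition \ref{second} places $p$ in $_{A_{\mathfrak n}}\!\langle s\mid s\in A \text{ a cycle with no cyclic subpaths},\ \tilde s = 0\rangle$; and each such zero-cycle $s$, having $\tilde e_{\operatorname{h}(s)} = \tilde e_{\operatorname{t}(s)}$ which may or may not vanish, is again routed through either Proposition \ref{second}'s target or Proposition \ref{third}. Chaining these, every type (b) generator lands in the right-hand side ideal, hence $J_2$ does too.

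Combining, $\ker\delta_{\mathfrak n} = J_1 + J_2$ is contained in the right-hand side, and conversely every generator on the right ($\mathscr P_i$ with $\tilde p = 0$, and $s - \tilde s e_{\operatorname{t}(s)}$ which is a type (a) element of $\ker\delta_{\mathfrak n}$) lies in $\ker\delta_{\mathfrak n}$, giving equality. I expect the main obstacle to be the bookkeeping in the $J_2$ step: one must be careful that when a path $p$ with $\tilde p = 0$ is rewritten — absorbing a locally invertible prefix or inserting/deleting a cycle via Lemma \ref{same} to arrange "no cyclic subpaths" or to reach an element of $\mathscr P_i$ — the rewriting stays inside the left ideal generated by the asserted generators and does not secretly require a generator outside the list. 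The fact that every arrow supported on $V$ is locally invertible in $A_{\mathfrak n}$ (Lemma \ref{lm:simple non max dim}) is what makes all these manipulations legitimate, and verifying that it suffices is the crux.
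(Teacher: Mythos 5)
Your proposal is correct and follows essentially the same route as the paper, whose proof of Theorem \ref{main 2} is simply to combine Propositions \ref{first}, \ref{second}, and \ref{third}; your write-up just makes the implicit decomposition of $\ker \delta_{\mathfrak{n}}$ into binomial and monomial generators, and the reverse inclusion, explicit. No gaps.
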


\begin{proof}
Follows from Propositions \ref{first}, \ref{second}, and \ref{third}.
\end{proof}

\section{Projective resolutions of the simple modules}

Alongside cluster algebras, one of the main driving forces for the wide mathematical interest in quivers with potential, as well as the construction of Calabi-Yau algebras, was a specific projective resolution introduced by two physicists, Berenstein and Douglas \cite[Section 5.5]{BD02}.\footnote{In fact, the notion of a quiver with potential was introduced by Berenstein in \cite{BJL00,BL01}.}
We first recall this resolution for the special case of (vertex simple modules over) dimer algebras on a torus.
We will then generalize the resolution to the setting of geodesic ghor algebras on higher genus surfaces using a Koszul-like complex.

Let $A = kQ/I$ be a dimer algebra, where $I$ is the ideal of relations (\ref{I}).
We keep the notation from Section~\ref{sec:syzygy} and write $\tilde{p} \in k$ for the scalar representing a path $p$ in a given simple $A$-module.
Let $V$ be the vertex simple $A$-module at $i$, that is, $\operatorname{dim}_kV = 1$ with $\tilde{e}_j = \delta_{ij}$. 
The \textit{Berenstein-Douglas complex} of $V$ is the projective complex
\begin{multline*} \label{BD resolution}
0 \to Ae_i \stackrel{\cdot \left[ \begin{smallmatrix} b_1 & \cdots & b_n \end{smallmatrix} \right]}{\longrightarrow} \bigoplus_{b_k \in e_iQ_1} Ae_{\operatorname{t}(b_k)} \\
\stackrel{\cdot \left[ \begin{smallmatrix} u_1 & -v_2 & 0 & \cdots & 0 \\ 0 & u_2 & -v_3 & & 0 \\ 0 & 0 & u_3 & & 0 \\ \vdots & & & \ddots & \vdots \\ -v_1 & 0 & 0 & \cdots & u_{\ell} \end{smallmatrix} \right]}{\longrightarrow} \bigoplus_{a_j \in Q_1e_i} Ae_{\operatorname{h}(a_j)} \stackrel{\cdot \left[ \begin{smallmatrix} a_1 \\ \vdots \\ a_n \end{smallmatrix} \right]}{\longrightarrow} Ae_i \stackrel{\delta}{\longrightarrow} V \to 0,
\end{multline*}
where $b_j u_j a_j$, $b_{j-1} v_ja_j$ are counterclockwise and clockwise unit cycles at $i$ respectively, as illustrated in Figure \ref{BD figure}, and $\delta(p) = p \cdot 1 \in k \cong V$ \cite[5.12]{BD02}.

%.18, .24
\begin{figure} 
\begin{equation*}
\begin{array}{ccc}
\includegraphics[scale=.23]{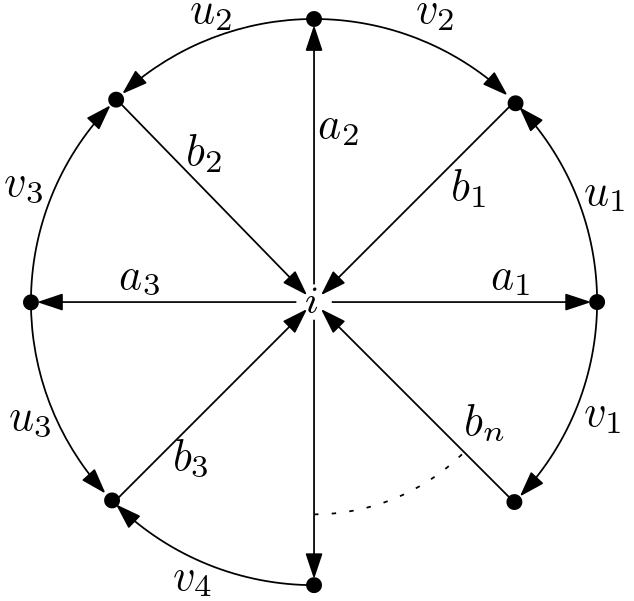} & \ \ \ \ &
\includegraphics[scale=.33]{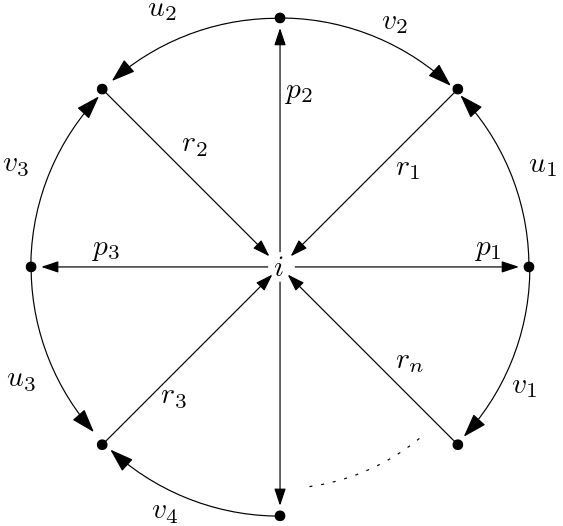}\\
(i) & & (ii)
\end{array}
\end{equation*}
\caption{The setup for the Berenstein-Douglas complex and its generalization in (\ref{homotopic BD}). Here, $a_j, b_j \in Q_1$ are arrows; $u_j,v_j \in Q_{\geq 0}$ are paths; and  $b_j u_j a_j$, $b_{j-1} v_ja_j$ are unit cycles.}
\label{BD figure}
\end{figure}

On a torus, commutation relations of cycles in the quiver $Q$ are homotopy relations, and so are accounted for in the Berenstein-Douglas complex.
Indeed, for cycles $p,q \in e_iAe_i$, we have
\begin{equation*}
pq = qp \ \ \ \ \Longleftrightarrow \ \ \ \ \text{$pq$ is homotopic to $qp$}.
\end{equation*}
This is illustrated in Figure \ref{torus figure}.
On genus $g \geq 2$ surfaces, however, commutation relations of cycles are not homotopy relations in general, since such a surface has nonzero curvature. 

\begin{figure}
\centering \includegraphics[width=1.0\linewidth]{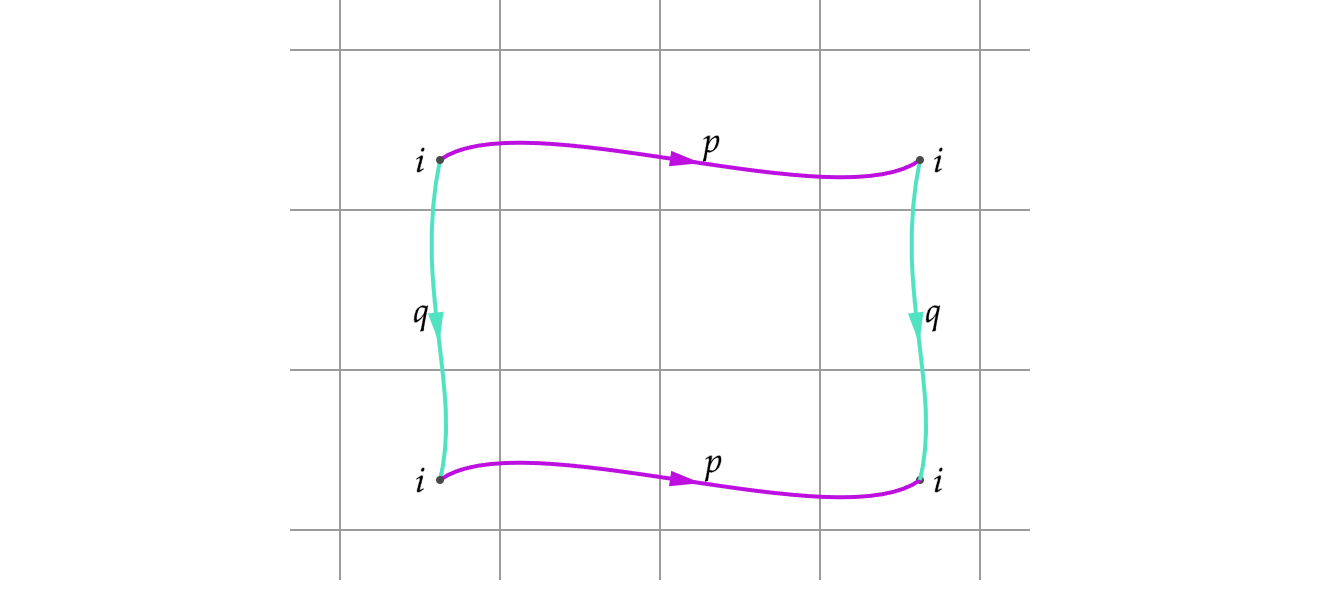}
%\vspace*{-1.0cm}
\caption{On a torus, commutation relations of cycles are homotopy relations. The cycles $p,q \in e_iAe_i$ are drawn on the torus cover $\mathbb{R}^2$, with each square a fundamental domain.}
\label{torus figure}
\end{figure}

We now construct a general projective resolution of simple modules over localized geodesic ghor algebras.
Let $\mathfrak{n}$ be a maximal ideal of $S$, and fix a simple $A_{\mathfrak{n}}$-module $V$.
In particular, $\operatorname{ann}_S V = \mathfrak{n}$.
Recall the set of paths $\mathscr{P}_i$ from Proposition \ref{third}.
Set
\begin{equation*}
Q'_0 := \{ i \in Q_0 \, | \, \tilde{e}_i \not = 0 \}.
\end{equation*}
To construct a projective resolution of $V$ we proceed in two steps:
\\ \\
\indent (i) \textit{A modified Koszul part for non-homotopic homologous cycles.}
Recall that the fundamental polygon of $\Sigma$ has $2N$ sides, and $\sigma := \prod_{x \in \mathcal{S}}x$ is the product of all simple matchings as defined in Notation~\ref{not:sigma}.
Set 
\begin{equation*}
\begin{split}
T & := \{ (a_1, \ldots, a_N) \, | \, a_j \in \{-1,0,1\} \} \subset \mathbb{Z}^N \cong H_1(\Sigma, \mathbb{Z}),\\
T^{\sigma} & := T \oplus \{ \sigma \} \subset \mathbb{Z}^{N+1} \cong H_1(\Sigma, \mathbb{Z}) \oplus \mathbb{Z} \sigma.
\end{split}
\end{equation*}
Furthermore, for $m \in [1, N]$, set
\begin{equation*} \label{T_m}
\begin{split}
T_m & := \{ (\alpha_1, \ldots, \alpha_m) \, | \, \text{$\alpha_1, \ldots, \alpha_m \in T$ linearly indep.\ over $\mathbb{Z}$} \},\\
T^{\sigma}_m & := \{ (\alpha_1. \ldots, \alpha_m) \, | \, \text{$\alpha_1, \ldots, \alpha_m \in T^{\sigma}$ linearly indep.\ over $\mathbb{Z}$} \}.
\end{split}
\end{equation*}
Denote by $kT_m$ and $kT^{\sigma}_m$ the $k$-vector spaces with bases $T_m$ and $T^{\sigma}_m$.
For $i \in Q'_0$ and $m \in [1,N]$, consider the finite rank projective left $A_{\mathfrak{n}}$-modules
\begin{equation} \label{Pm}
P_{im} := \left\{ \begin{array}{ll}
A_{\mathfrak{n}} e_i \otimes_k kT^{\sigma}_m & \text{ if $\underline{\dim}V = 1^{Q_0}$}\\ 
A_{\mathfrak{n}} e_i \otimes_k kT_m & \text{ otherwise}
\end{array} \right., \ \ \ \ \ \ \ P_{i0} = A_{\mathfrak{n}}e_i,
\end{equation}
and connecting maps $\delta_{im}: P_{im} \to P_{i,m-1}$ defined by
\begin{equation} \label{cycle part}
\delta_{im}(\beta \otimes (\alpha_1, \ldots, \alpha_m)) = \sum_{j = 1}^m (-1)^{j+1} \beta (s_j - \tilde{s}_je_i) \otimes (\alpha_1, \ldots, \widehat{\alpha}_j, \ldots, \alpha_m),
\end{equation}
where $s_j$ is a geodesic cycle in $e_iA_{\mathfrak{n}}e_i$ with homology class $[s_j] = \alpha_j$.

For each $(\alpha_1, \alpha_2) \in T_2$, let $s$ be a geodesic cycle for which there are cycles $t_1, t_2$, with $[t_1] = \alpha_1$, $[t_2] = \alpha_2$, such that $s$ is homotopic to $t_1t_2$.
If $s$ has a trivial subpath $e_j$ for which $\tilde{e}_j = 0$, factor $s$ into subpaths $s = q_{12}p_{12}$ with $p_{12} \in \mathscr{P}_i$; in this case we say $s$ \textit{disappears}.

Let $k\mathscr{P}_i$ be the $k$-vector space with basis $\mathscr{P}_i$.
Consider the complex
\begin{equation*}
P_{i2} \stackrel{\delta'_{i2}}{\longrightarrow} P_{i1} \oplus \bigoplus_{p \in \mathscr{P}_i}
A_{\mathfrak{n}}e_{\operatorname{h}(p)} \otimes_k p \stackrel{\delta'_{i1}}{\longrightarrow} P_{i0},
\end{equation*}
where 
\begin{equation*}
\delta'_{i2}( \beta \otimes (\alpha_1, \alpha_2)) := \left\{ \begin{array}{ll}
\beta (q_{12} \otimes p_{12} - q_{21} \otimes p_{21}) & \text{ if $s$ disappears}\\
\delta_{i2}(\beta \otimes (\alpha_1, \alpha_2)) & \text{ otherwise}
\end{array} \right.
\end{equation*}
and for $\beta \otimes \alpha \in P_{i1}$, $p \in \mathscr{P}_i$,
\begin{equation*}
\delta'_{i1}(\beta \otimes \alpha) := \delta_1(\beta \otimes \alpha), \ \ \ \ \ \ 
\delta'_{i1}(\beta \otimes p) := \beta p.
\end{equation*}

(ii) \textit{A generalized Berenstein-Douglas part for homotopic paths.}
For $i \in Q'_0$, order the paths in $\mathscr{P}_i$ counterclockwise around $i$, say $p_1, \ldots, p_{\ell} \in \mathscr{P}_i$. 
For $j \in [1,\ell]$, let $u_j,v_j,r_j$ be minimal paths as shown in Figure \ref{torus figure}.ii satisfying the homotopy relations
\begin{equation} \label{homotopic BD}
u_jp_j - v_{j+1}p_{j+1} = 0, \ \ \ \ \ \  r_{j-1}v_j - r_ju_j = 0.
\end{equation}
From these relations we can construct a generalization of the Berenstein-Douglas complex:
\begin{equation*}
0 \longrightarrow A_{\mathfrak{n}}e_i \stackrel{\delta'_{i3}}{\longrightarrow} \bigoplus_{j = 1}^{\ell} A_{\mathfrak{n}}e_{\operatorname{t}(r_j)} \stackrel{\delta'_{i2}}{\longrightarrow} \bigoplus_{p \in \mathscr{P}_i} A_{\mathfrak{n}}e_{\operatorname{h}(p)} \otimes_k p,
\end{equation*}
where
\begin{equation*}
\begin{split}
\delta'_{i3}(\beta) & = \beta(r_1, \ldots, r_{\ell}),\\
\delta'_{i2}(\beta e_{\operatorname{t}(r_j)}) & = \beta( u_j \otimes p_j - v_{j+1} \otimes p_{j+1}).
\end{split}
\end{equation*}

\begin{Remark} \rm{
If $V$ has dimension vector $\underline{\dim}V = 1^{Q_0}$ (that is, maximal $k$-dimension by Lemma \ref{lm:simple}), then $\mathscr{P}_i = \varnothing$.
In this case, then, the Berenstein-Douglas complex will not be part of the projective resolution of $V$.
}\end{Remark}

Putting (i) and (ii) together yields a projective complex of $e_iV$,
\begin{multline} \label{proj resolution}
0 \to P_{in} \stackrel{\delta_{in}}{\longrightarrow} P_{i,n-1} \stackrel{\delta_{i,n-1}}{\longrightarrow} \cdots \stackrel{\delta_{i5}}{\longrightarrow} P_{i4} \stackrel{\delta_{i4}}{\longrightarrow} P_{i3} \oplus A_{\mathfrak{n}}e_i\\ 
\stackrel{\delta_{i3} \oplus \delta'_{i3}}{\longrightarrow} P_{i2} \oplus \bigoplus_{j = 1}^{\ell} A_{\mathfrak{n}}e_{\operatorname{t}(r_j)} \stackrel{\delta'_{i2}}{\longrightarrow} P_{i1} \oplus 
\bigoplus_{p \in \mathscr{P}_i} A_{\mathfrak{n}}e_{\operatorname{h}(p)} \otimes_k p
 \stackrel{\delta'_{i1}}{\longrightarrow} P_{i0} \to e_iV \to 0,
\end{multline}
where $n = N+1$ if $\underline{\dim} V = 1^{Q_0}$, and $n \leq \operatorname{max}\{3, N\}$ otherwise.
A projective resolution of $V$ is then obtained by taking a direct sum of these complexes over $i \in Q'_0$.

\begin{Theorem} \label{pd theorem}
Let $V$ be a simple $A_{\mathfrak{n}}$-module.
Then
\begin{equation*}
\operatorname{pd}_{A_{\mathfrak{n}}}V \ \left\{ \begin{array}{ll} = N+1 & \text{ if $\underline{\dim} V = 1^{Q_0}$}\\ \leq \operatorname{max}\{3, N \} & \text{ otherwise} \end{array} \right.
\end{equation*}
\end{Theorem}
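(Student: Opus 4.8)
The plan is to show that the projective complex (\ref{proj resolution}) is in fact a resolution of $e_iV$, and then read off the projective dimension from its length, taking the direct sum over $i \in Q'_0$ as indicated. The verification splits into two independent tasks matching the two parts of the construction: exactness of the modified Koszul part in homological degrees $\geq 2$ (coming from (i)), and exactness of the generalized Berenstein--Douglas part in degrees $1,2,3$ (coming from (ii)), with the two pieces glued along $P_{i2}$ and $P_{i1}\oplus\bigoplus_p A_{\mathfrak{n}}e_{\operatorname{h}(p)}\otimes p$. The right-hand end (exactness at $P_{i0}$ and at the syzygy spot) is already essentially Theorem \ref{main 2}, which identifies $\ker\delta_{\mathfrak{n}}$ as generated exactly by the images of $\delta'_{i1}$: the $\mathscr{P}_i$-summands account for the monomial generators and the $P_{i1}$-summand accounts for the binomial generators $s-\tilde s e_{\operatorname{t}(s)}$ via Proposition \ref{first}.

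For the Koszul part, the key point is that, after localizing at $\mathfrak{n}$, every geodesic cycle through $i$ is locally invertible (Lemma \ref{lm:simple non max dim}) and the subring $\bar\tau(e_iA_{\mathfrak{n}}e_i)$ is, by Lemma \ref{same} together with Proposition \ref{max dim prop} (when $\underline{\dim}V=1^{Q_0}$) or the corresponding localization of the cycle algebra otherwise, a localization of a Laurent-polynomial ring on the classes in $H_1(\Sigma)\cong\mathbb{Z}^N$ (augmented by $\sigma$ in the maximal-dimension case). Concretely, I would argue that $e_iA_{\mathfrak{n}}e_i$ acts on $V$ through a quotient $k[x_1^{\pm1},\dots,x_N^{\pm1}]/\mathfrak{m}$ (or the $(N{+}1)$-variable version), and that the differentials $\delta_{im}$ of (\ref{cycle part}) are precisely the Koszul differentials on the regular sequence $x_1-\tilde x_1,\dots,x_N-\tilde x_N$ (resp.\ together with $\sigma-\tilde\sigma$) tensored up to $A_{\mathfrak{n}}e_i$ over the commutative local ring $(e_iA_{\mathfrak{n}}e_i)_{\mathfrak m}$. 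Since that sequence is regular of length $N$ (resp.\ $N+1$) in a regular local ring, the Koszul complex is exact except at the bottom, and $A_{\mathfrak{n}}e_i$ is flat over $e_iA_{\mathfrak{n}}e_i$ because it is free with basis the $\bar\tau$-preimages of coset representatives (the paths up to cycles), so exactness is preserved. This gives exactness of (\ref{proj resolution}) in degrees $\geq 4$ and at $P_{i3}\oplus A_{\mathfrak{n}}e_i$, and in the maximal-dimension case pins the length at exactly $N+1$ because the top Koszul homology $H_{N+1}$ would be $\operatorname{Tor}^{\text{reg local}}_{N+1}(k,k)\neq 0$, forcing the top term $P_{in}$ to be genuinely needed (so $\operatorname{pd}=N+1$, not less).

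For the Berenstein--Douglas part I would imitate \cite[5.12]{BD02}: the relations (\ref{homotopic BD}) are the minimal homotopy relations around the vertex $i$ among the ``first exits'' $p_1,\dots,p_\ell\in\mathscr{P}_i$, and geodesicity (Definition \ref{geodesic def}) is exactly what guarantees these paths and the connecting paths $u_j,v_j,r_j$ exist and organize into the cyclic pattern of Figure \ref{BD figure}. One checks $\delta'_{i3}$ is injective (a nonzero multiple of $(r_1,\dots,r_\ell)$ cannot vanish since the $r_j$ emanate in distinct directions and $A_{\mathfrak n}e_i$ is a domain-graded module), exactness at $\bigoplus_j A_{\mathfrak n}e_{\operatorname{t}(r_j)}$ via the second relation in (\ref{homotopic BD}), and exactness at $\bigoplus_p A_{\mathfrak n}e_{\operatorname h(p)}\otimes p$ via the first relation together with the fact that any syzygy among the $p_j$'s is, after multiplying by a suitable cycle, a consequence of homotopy. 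The ``disappears'' modification of $\delta'_{i2}$ handles the case where a geodesic cycle bridging two homology directions happens to pass through a vertex killed in $V$, reducing a would-be Koszul relation to a Berenstein--Douglas-type binomial relation in the $\mathscr{P}_i$'s; I would check the complex property $\delta'_{i1}\circ\delta'_{i2}=0$ and the needed exactness separately in this degenerate case.

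The main obstacle is the gluing and the bookkeeping where the two complexes meet, i.e.\ verifying exactness at $P_{i2}\oplus\bigoplus_j A_{\mathfrak n}e_{\operatorname{t}(r_j)}$ and at $P_{i1}\oplus\bigoplus_p A_{\mathfrak n}e_{\operatorname h(p)}\otimes p$ for the combined differential $\delta_{i3}\oplus\delta'_{i3}$ and $\delta'_{i2}$: one must show that a cycle in the combined middle term that maps to zero is a boundary, which requires disentangling ``pure cycle'' relations (Koszul) from ``homotopy'' relations (Berenstein--Douglas) and showing there is no extra overlap beyond the explicitly accounted-for ``disappearing'' cycles. I expect this to rest on the dichotomy (\ref{p = q sigma}): two cycles with the same $\bar\tau$-image (up to $\sigma$) are homologous, and conversely homologous cycles differ by a $\sigma$-power, so every relation among cycles in $e_iA_{\mathfrak n}e_i$ decomposes canonically into a homological (Koszul) part and a homotopical (BD) part. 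Once that decomposition is made precise, exactness at the junction follows, and the stated bound $\operatorname{pd}_{A_{\mathfrak n}}V\le\max\{3,N\}$ in the non-maximal case is just the length of the longer of the two glued complexes, while the equality $\operatorname{pd}_{A_{\mathfrak n}}V=N+1$ in the maximal case comes from the nonvanishing top Koszul homology noted above.
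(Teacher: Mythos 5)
Your overall architecture is the same as the paper's: take the explicit complex (\ref{proj resolution}), use Theorem \ref{main 2} to control the first syzygy, let the Koszul-type part account for the cycle relations $s-\tilde{s}e_{\operatorname{t}(s)}$ and the generalized Berenstein--Douglas part account for the homotopy relations among the paths in $\mathscr{P}_i$, and read the projective dimension off the length. But there is a concrete missing idea: you never justify the case distinction in the statement, i.e.\ why the bound drops from $N+1$ to $\max\{3,N\}$ when $\underline{\dim}V \neq 1^{Q_0}$. In your Laurent-ring picture the commutative ring through which $e_iA_{\mathfrak{n}}e_i$ acts carries $\sigma$ as well as the $H_1$-directions, so your Koszul complex would have length $N+1$ in \emph{both} cases. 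The paper's proof is devoted precisely to removing the $\sigma$-direction in the non-maximal case: there $\tilde{\sigma}_i=0$, and for any $ap\in\mathscr{P}_i$ with $a\in Q_1$ one has $\sigma_i = p^{-1}rap \in {}_{A_{\mathfrak{n}}}\!\left\langle ap\right\rangle \subseteq {}_{A_{\mathfrak{n}}}\!\left\langle \mathscr{P}_i\right\rangle$ (with $ra$ a unit cycle), so the generator $\sigma_i-\tilde{\sigma}_ie_i$ of $\ker\delta_{\mathfrak{n}}$ is absorbed into the monomial part; together with the observation that all commutation relations between $\sigma_i$ and other cycles are homotopy relations, this is what legitimizes using $T_m$ instead of $T_m^{\sigma}$ in (\ref{Pm}) and caps the length at $\max\{3,N\}$. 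Without this step your argument proves at best $\operatorname{pd}_{A_{\mathfrak{n}}}V\leq N+1$ in all cases, which is not the theorem.

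Two further points. First, the step you yourself flag as the ``main obstacle'' -- exactness at the junction of the Koszul and Berenstein--Douglas pieces -- is exactly the content of the theorem beyond Theorem \ref{main 2}, so leaving it as ``once that decomposition is made precise, exactness follows'' is an acknowledged gap, not a proof. Second, your commutative-algebra scaffolding rests on unverified claims: that $\bar{\tau}(e_iA_{\mathfrak{n}}e_i)$ is (a localization of) a regular ring on which the elements $s_j-\tilde{s}_j$ form a regular sequence, and that $A_{\mathfrak{n}}e_i$ is free over $e_iA_{\mathfrak{n}}e_i$. For an arbitrary $\mathfrak{n}\in\operatorname{Max}S$ the local ring $S_{\mathfrak{n}}$ need not be regular (already for torus dimer algebras the interesting points are singular points of $\operatorname{Max}S$), so the regular-sequence framing needs care and is not how the paper argues; the paper instead only uses that a minimal generating set of $\mathfrak{n}_{\mathfrak{n}}$ consists of elements $\bar{s}-\tilde{s}$, so that the maps (\ref{cycle part}) cover the relations coming from $\operatorname{ann}_SV$. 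On the positive side, your minimality/Tor argument for why $\operatorname{pd}_{A_{\mathfrak{n}}}V$ equals (rather than is at most) $N+1$ in the maximal-dimension case is a reasonable supplement that the paper leaves implicit.
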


\begin{proof}
Since the field $k$ is algebraically closed and $\mathfrak{n}_{\mathfrak{n}}$ is a maximal ideal, the generators for a minimal generating set of $\mathfrak{n}_{\mathfrak{n}}$ are of the form $\bar{s} - \tilde{s}$, where $s$ is a cycle in $A_{\mathfrak{n}}$, $\bar{s} \in S_{\mathfrak{n}}$ is its corresponding monomial, and $\tilde{s} \in k$ is the scalar representing $s$.
Thus, the Koszul-like connecting maps (\ref{cycle part}) cover the relations between the annihilators of $V$ in $S_{\mathfrak{n}}$, namely $\mathfrak{n}_{\mathfrak{n}}$.

Suppose $\underline{\dim}V \not = 1^{Q_0}$.
Then for each $i \in Q'_0$, $\mathscr{P}_i \not = \varnothing$.
Consequently, 
\begin{equation*}
\sigma_i - \tilde{\sigma}_i e_i = \sigma_i \in \, _{A_{\mathfrak{n}}} \! \left\langle \mathscr{P}_i \right\rangle.
\end{equation*}
Indeed, choose a path $ap \in \mathscr{P}_i$, with $a \in Q_1$.
Since $ap$ is in $\mathscr{P}_i$, we have $\tilde{e}_{\operatorname{h}(a)} = 0 \not = \tilde{p}$.
In particular, since $\tilde{e}_i \not = 0$, $ap$ itself is not a unit cycle.
Let $r \in e_{\operatorname{t}(a)}A_{\mathfrak{n}}e_{\operatorname{h}(a)}$ be a path for which $ra$ is a unit cycle.
Then 
\begin{equation*}
\sigma_i = p^{-1}p \sigma_i = p^{-1}\sigma_{\operatorname{h}(p)}p = p^{-1}rap \in \, _{A_{\mathfrak{n}}} \! \left\langle ap \right\rangle.
\end{equation*}

Furthermore, all commutation relations between $\sigma_i$ and other cycles are homotopy relations.
Thus, to construct a minimimal projective resolution of $e_iV$, it suffices to use $T_m$ rather than $T_m^{\sigma}$ in (\ref{Pm}).

Therefore, for any simple $A_{\mathfrak{n}}$-module $V$, (\ref{proj resolution}) is a projective resolution by Theorem \ref{main 2}.
\end{proof}

\begin{Theorem} \label{main corollary}
Let $P$ be a convex $2N$-gon, let $\Sigma$ be the surface obtained by identifying the opposite sides, and vertices, of $P$, and let $A = kQ/\ker \eta$ be a geodesic ghor algebra on $\Sigma$ with center $R$ and cycle algebra $S$.
Then for each maximal ideal $\mathfrak{n} \in \operatorname{Max}S$, the global dimension of the cyclic localization $A_{\mathfrak{n}}$ is bounded above by the Krull dimensions of $R$ and $S$,
\begin{equation*}
\operatorname{gldim} A_{\mathfrak{n}} \leq \dim R = \dim S = N+1,
\end{equation*}
with equality if, and only if for $N \geq 3$, $\mathfrak{n}$ is in the noetherian locus $U_{S/R} \subset \operatorname{Max}S$ of $R$.
\end{Theorem}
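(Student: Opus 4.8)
The strategy is to reduce $\operatorname{gldim} A_{\mathfrak{n}}$ to the projective dimensions of the simple modules and then invoke Theorem \ref{pd theorem}. Any simple $A_{\mathfrak{n}}$-module $V$ is finite dimensional with $\underline{\dim}V \leq 1^{Q_0}$ (each $e_iV$ is a simple module over the commutative ring $e_iA_{\mathfrak{n}}e_i$, cf.\ Lemma \ref{lm:simple}) and has $\operatorname{ann}_S V = \mathfrak{n}$. Since $A_{\mathfrak{n}}$ is semiperfect --- it carries the finite complete orthogonal idempotent set $\{e_i\}$, each corner $e_iA_{\mathfrak{n}}e_i$ being a commutative local ring --- the homological results on cyclic localizations of \cite{B18, B21b}, together with Proposition \ref{main0}, yield the reduction $\operatorname{gldim} A_{\mathfrak{n}} = \sup\{\operatorname{pd}_{A_{\mathfrak{n}}} V \mid V \text{ simple}\}$. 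I expect establishing this reduction carefully in the nonnoetherian setting to be the main technical obstacle.

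Granting it, Theorem \ref{pd theorem} immediately gives $\operatorname{gldim} A_{\mathfrak{n}} \leq N+1$. For the identities $N+1 = \dim S = \dim R$ I would use that $S$ is an affine domain generated by the monomials of cycles, whose homology classes together with $[\sigma]$ generate $H_1(\Sigma,\mathbb{Z}) \oplus \mathbb{Z}\sigma \cong \mathbb{Z}^{N+1}$ by (\ref{p = q sigma}), so $\dim S = \operatorname{rank} H_1(\Sigma) + 1 = N+1$; and that $S$ is a depiction of $R$, so $\dim R = \dim S$ by \cite{B16}. Since $S$ is an affine domain, moreover $\dim S_{\mathfrak{n}} = \dim S = N+1$ for every maximal ideal $\mathfrak{n}$, which is used below.

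For the equality clause, take the two cases. If $\mathfrak{n} \notin U_{S/R}$ and $N \geq 3$, the contrapositive of Proposition \ref{max dim prop} shows that no simple $A$-module with $S$-annihilator $\mathfrak{n}$, hence no simple $A_{\mathfrak{n}}$-module, has dimension vector $1^{Q_0}$; so by Theorem \ref{pd theorem} every simple $A_{\mathfrak{n}}$-module has projective dimension at most $\max\{3,N\} = N$, and the reduction gives $\operatorname{gldim} A_{\mathfrak{n}} \leq N < N+1$, a strict inequality. (For $N = 2$ the equivalence is vacuous, since $R = S$ forces $U_{S/R} = \operatorname{Max}S$.) If $\mathfrak{n} \in U_{S/R}$, I must exhibit a simple $A_{\mathfrak{n}}$-module of dimension vector $1^{Q_0}$; Theorem \ref{pd theorem} then assigns it projective dimension exactly $N+1$, forcing $\operatorname{gldim} A_{\mathfrak{n}} \geq N+1$, and equality follows. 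Over the noetherian locus $A_{\mathfrak{n}} \cong \operatorname{End}_{R_{\mathfrak{m}}}(M)$ with $R_{\mathfrak{m}} = S_{\mathfrak{n}}$ and $\mathfrak{m} = \mathfrak{n}\cap R$ \cite[Theorem 4.15]{BB21a}, and the desired module is the residue module $V = M/\mathfrak{m}M$ --- equivalently, the representation of $Q$ obtained by evaluating cycles at $\mathfrak{n}$. That this $V$ is simple of dimension vector $1^{Q_0}$ is the converse to Proposition \ref{max dim prop}, namely that the points of $U_{S/R}$ are exactly the $S$-annihilators of the maximal-dimension-vector simple modules; this is the second delicate point, and the one most reliant on the structure theory of \cite{BB21a}.
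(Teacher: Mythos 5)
Your proposal follows essentially the same route as the paper, whose entire proof of Theorem \ref{main corollary} is the one-line citation of Proposition \ref{main0} and Theorem \ref{pd theorem}: reduce $\operatorname{gldim} A_{\mathfrak{n}}$ to the projective dimensions of simple modules and read off the bound and the dichotomy at $\underline{\dim}V = 1^{Q_0}$ from Theorem \ref{pd theorem}, with Proposition \ref{max dim prop} governing the noetherian-locus clause. In fact you supply more detail than the paper itself --- the dimension count $\dim R = \dim S = N+1$, the strict inequality off $U_{S/R}$ via $\max\{3,N\} = N$, and the two points you flag as delicate (the reduction of $\operatorname{gldim}$ to simples in the nonnoetherian setting, and the existence of a simple module of dimension vector $1^{Q_0}$ over each $\mathfrak{n} \in U_{S/R}$) are all left implicit in the paper's proof.
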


\begin{proof}
Follows from Proposition \ref{main0} and Theorem \ref{pd theorem}.
\end{proof}

\textbf{Acknowledgments.}
The first author was supported by a Royal Society Wolfson Fellowship RSWF/R1/180004 and the Programme Grant (EP/W007509/1).
The second author was supported by the Austrian Science Fund (grant P 34854).

\bibliographystyle{acm}
\bibliography{BaurBeil}

\end{document}